
\documentclass[11pt,reqno,tbtags]{amsart}
\usepackage{amssymb}
\usepackage{natbib}
\bibpunct[, ]{[}{]}{;}{n}{,}{,}

\numberwithin{equation}{section}
\allowdisplaybreaks


\newtheorem{theorem}{Theorem}[section]
\newtheorem{lemma}[theorem]{Lemma}

\newtheorem{corollary}[theorem]{Corollary}
\newtheorem{conjecture}[theorem]{Conjecture}

\theoremstyle{definition}

\newtheorem{definition}{Definition}

\theoremstyle{remark}

\newenvironment{romenumerate}{\begin{enumerate}
 }{\end{enumerate}}

\newcounter{oldenumi}
{\setcounter{oldenumi}{\value{enumi}}
\begin{romenumerate} \setcounter{enumi}{\value{oldenumi}}}
{\end{romenumerate}}

\newcounter{thmenumerate}
\newenvironment{thmenumerate}
{\setcounter{thmenumerate}{0}%
 \def\item{\par
 \refstepcounter{thmenumerate}\textup{(\roman{thmenumerate})\enspace}}
}
{}

\newcounter{xenumerate}   

\newenvironment{proofof}[1]{\noindent {\bf
Proof of #1}.}{\proofbox\par\smallskip\par}

\begingroup
  \count255=\time
  \divide\count255 by 60
  \count1=\count255
  \multiply\count255 by -60
  \advance\count255 by \time
  \ifnum \count255 < 10 \xdef\klockan{\the\count1.0\the\count255}
  \else\xdef\klockan{\the\count1.\the\count255}\fi
\endgroup


\newcommand{\halmos}{\rule{1ex}{1.4ex}}
\newcommand{\proofbox}{\hspace*{\fill}\mbox{$\halmos$}}

\def\rompar(#1){\textup(#1\textup)}    

\def\xexp(#1){e^{#1}}


\newcommand\R{\mathbb R}

\newcommand\N{\mathbb N}  

\newcommand\Z{\mathbb Z}

\newcounter{CC}

\newcommand\ran{\operatorname{\mathrm ran}}
\newcommand\dev{\operatorname{\mathrm dev}}
\newcommand\var{\operatorname{\mathrm var}}

\newcommand\E{\operatorname{\mathbb E{}}}
\renewcommand\P{\operatorname{\mathbb P{}}}

\newcommand{\lip}{\mbox{\tiny Lip}}

\newcommand\eps{\varepsilon}

\newcommand\cF{\mathcal F}

\newcommand\cL{{\mathcal L}}
\newcommand\cP{\mathcal P}

\def\[#1]{[\![#1]\!]}

\newcommand\dtv{d_{\mathrm{TV}}}
\newcommand\dw{d_{\mathrm{W}}}



\newcommand{\X}{\Omega}
\newcommand{\deq}{:=}
\newcommand{\ed}{\mathcal{E}}
\newcommand{\st}{\,:\,}
\newcommand{\tmix}{t_{{\rm mix}}}
\newcommand{\cn}{\gamma}

\begin{document}
\title
{Concentration of measure and mixing for Markov chains}

\date{1 September 2008} 

\author{Malwina J. Luczak}
\address{Department of Mathematics, London School of Economics,
  Houghton Street, London WC2A 2AE, United Kingdom}
\email{m.j.luczak@lse.ac.uk}
\urladdr{http://www.lse.ac.uk/people/m.j.luczak@lse.ac.uk/}

\keywords{Markov chains, concentration of measure, rapid mixing}
\subjclass[2000]{60C05, 60F05, 60J75}

\begin{abstract} 
We consider Markovian models on graphs with local dynamics. We show
that, under suitable conditions, such Markov chains exhibit both rapid
convergence to equilibrium and strong concentration of measure in the
stationary distribution. We illustrate our results with applications to
some known chains from computer science and statistical mechanics.

\end{abstract}

\maketitle

\section{Introduction}\label{S:intro}

Recent years have witnessed a surge of activity 
in the mathematics of real-world networks, especially the
study of combinatorial and stochastic models. Such networks include,
for instance, the Internet, social networks, and biological networks.
The techniques used to analyse them 
draw from a range of mathematical disciplines, such as graph theory,
probability, statistical physics, analysis.  
Strikingly, random processes with rather similar
characteristics can occur as models of very different real-world settings.

Random networks can often be regarded 
as interacting systems of individuals or
particles. Under certain conditions, there is a law of large numbers,
that is, a large system is 
close to a deterministic process solving a differential equation
derived from the average `drift', with much simpler dynamics.
Further, one may frequently observe {\it chaoticity}, i.e. asymptotic 
approximate independence of particles. 
Unfortunately, it is often difficult to prove the validity of
such approximations,
especially when the random process has an unbounded number of 
components in the limit (e.g. the number of vertices or components of
size $k$ in a graph of size $n$, for $k=1,2, \ldots$, as $n \to \infty$). 

In other instances, it 
may be difficult to establish good rates of
convergence for mean-field approximations, or determine whether the
long-term and equilibrium behaviour of the random process also follows
that of the deterministic system. Furthermore, 
some recent attempts at a more accurate representation of
real networks still await 
any kind of mathematically rigorous analysis.
We would hope that over the coming years, the
intense interest will produce a coherent and widely
applicable theory. However, at present, it often appears that
each new problem defies the existing theory in an interesting way.

In many complex systems, laws of large numbers and high concentration
of measure in equilibrium have been found to co-exist with so-called
{\em rapid
mixing}~\cite{bd97,j98, weitz}, that is
mixing in time $O(n \log n)$, where $n$ is a measure of the system
size. (Traditionally, such a system was considered to be rapidly
mixing if it converged to equilibrium in a time polynomial in $n$, but
currently the term is more and more restricted to the `optimal' mixing
time $O(n \log n)$, see for example~\cite{weitz,dgm00}.)
There are some very notable examples of such behaviour, for instance, the
subcritical Ising model, see~\cite{dlp08,llp08} and references
therein, as well as the discussion in Section~\ref{sec:ising} of this paper.

The purpose of this article is to propose a new method to establish
concentration of measure in complex systems modelled by Markov
chains. We illustrate the technique with an application to a
balls-and-bins model analysed in some
earlier works by this author and McDiarmid, the {\em supermarket
model}~\cite{lm04,lm04b}. Strong concentration of measure for this model, over
long time intervals starting from a given state, as well as in
equilibrium, was established in~\cite{lm04,lm04b} using the underlying structure of the
model that enabled certain functions to be considered as functions of
independent random variables so that the bounded differences method
could be used.

In Section~\ref{sec:conc} of the present article we show that such
concentration of measure inequalities hold more generally, 
with fewer assumptions on the structure of the Markov process involved. Our result is
somewhat related, in spirit, to results (and arguments) in~\cite{lm08},
which establishes transportation cost inequalities for the measure at
time $t$ and the stationary measure of a contracting Markov
chain, assuming transportation cost inequalities for the kernel. However, the technical 
approach adopted here is rather different from~\cite{lm08} -- discrete
and coupling-based rather than functional analytic, 
and, we think, more `hands on' and easier to use in practice (though
our setting is less general than in~\cite{lm08}).
It is striking that our approach, considerably more general than the one
taken in~\cite{lm04}, enables us to improve on the concentration of measure
results proved in~\cite{lm04}. (Accordingly, we could also prove
improved versions of results in~\cite{lm04b}, but we choose not to
pursue this here.) The results in Section~\ref{sec:conc} also
significantly extend Lemma 2.6 in~\cite{llp08}, which bounds the
variance of a real-valued, discrete-time, contracting Markov chain at
time $t$ and in equilibrium.
We hope many more applications for the ideas
presented here will be found in the future.

\medskip

\section{Notation and definitions}

\label{sec:not}

Let $X=(X_t)_{t \in \Z^+}$ be a
discrete-time Markov chain with
a discrete state space $S$ and transition probabilities $P(x,y)$ for $x,y
\in S$, where $\sum_{y \in S} P(x,y) =1$ for each $x \in S$.
We assume that, for every pair of states $x,y \in S$, $P(x,y) > 0$ if
and only if $P(y,x) > 0$. Then we can form an undirected graph
with vertex set $S$ where $\{x,y\}$
is an edge if and only if $P(x,y) > 0$ and $x \not =y$. In general,
our chains may be lazy, that is we can have $P(x,x) > 0$ for some $x \in S$. We assume that the graph is
locally finite, that is, each vertex is adjacent to only finitely many
other vertices. We now endow $S$
with a graph metric $d$ given by $d(x,y) =1$ if $P(x,y) >0$ and $x
\not = y$, and, for all other $x,y$,
$d(x,y)$  the length of the shortest path between $x$ and
$y$ in the graph, which is assumed to be connected.

This kind of setting is natural and many models
in applied probability and combinatorics fit into this framework, 
including those discussed in Section~\ref{sec:examples}.

For each $t \in \Z^+$, $X_t$ may be viewed as a random variable on a measurable space
$(\Omega, \cF)$, where
$$\Omega = \{\omega= (\omega_0, \omega_1, \ldots): \omega_i \in S
\quad \forall i \},$$
and $\cF = \sigma (\cup_{t=0}^{\infty} \cF_t)$, with $\cF_t = \sigma
(X_i: i \le t)$. Then each $X_i$ is the $i$-co-ordinate
projection, that is $X_i (\omega)=\omega_i$ for $i \in \Z^+$. Then the
$\sigma$-fields $\cF_t$ form the natural filtration for the process.

Let $\cP (S)$ be the power set of $S$.
The law of the Markov chain is a probability measure $\P$ on
$(\Omega, \cF)$, and is determined uniquely by the transition matrix
$P$ together with a probability measure $\mu$ on $(S, \cP
(S))$ that gives the law of
the initial state $X_0$, according to
\begin{eqnarray*}
\P (\{ \omega: \omega_j = x_j: j \le i\}) = \mu (\{x_0\})
\prod_{j=0}^{i-1} P(x_j,x_{j+1}),
\end{eqnarray*}
for each $x_0, \ldots, x_i \in S$, for each $i \in \Z^+$.
This gives the law of $(X_t)$  conditional on $\cL (X_0) =
\mu$, and will be denoted by  $\P_{\mu}$ in what follows. Let
 $P^t(x,y)$ be the $t$-step transition probability from $x$ to
$y$, given inductively by
$$P^t (x,y) = \sum_{z \in S} P^{t-1} (x,z) P(z,y).$$
Then $\P_{\mu} (X_t \in A) = (\mu P^t)(A)$ for $A \subseteq S$.

Let $\E_{\mu}$ denote the expectation operator corresponding to
$\P_{\mu}$.
For $t \in \Z^+$ and $f: S \to \R$, define the function $P^t f$ by
$$(P^t f)(x) = \sum_y P^t(x,y) f(y),  \quad x \in S.$$
In other words, $(P^t f)(x) = \E_{\delta_x} [f(X_t)]= (\delta_x
P^t)(f)$, the expected value of $f(X_t)$ at time $t$ conditional on
the Markov process starting at $x$, i.e. the expectation of the
function $f$ with respect to measure $\delta_x P^t$. In general, we
write $\E_{\mu} [f(X_t)] = (\mu P^t)(f)$.

A real-valued function $f$ on $S$ is said to be Lipschitz (or
1-Lipschtitz) if
$$\parallel f \parallel_{\lip}= \sup_{x \not = y}
\frac{|f(x)-f(y)|}{d(x,y)} \le 1.$$
Here, equivalently, we only need to consider vertices at distance 1,
so $f$ is Lipschitz if and only if 
$\sup_{x,y:d(x,y)=1} |f(x)-f(y)| \le 1$.

Given a probability measure
$\mu$ on $(S, \cP (S))$ and an $S$-valued random variable $X$ with law
$\cL (X)=\mu$, we say that $\mu$ or $X$ has {\em normal 
concentration} if there exist constants $C,c >0$ such that, for every
$u > 0$, uniformly over 1-Lipschitz functions $f:S \to \R$,
\begin{equation}
\label{def-norm-conc}
\mu (|f(X)- \mu (f)| \ge u) \le C e^{-cu^2}.
\end{equation}
We say that $\mu$ or $X$ has {\em exponential concentration} if 
there exist constants $C,c >0$ such that, for every
$u > 0$, uniformly over 1-Lipschitz functions $f:S \to \R$,
\begin{equation}
\label{def-exp-conc}
\mu (|f(X)- \mu (f)| \ge u) \le C e^{-cu}.
\end{equation}
These definitions are closely related to the notions used by
Ledoux~\cite{ledoux}.

In Section~\ref{sec:conc} we shall give conditions under which a
discrete-time Markov chain $(X_t)$ exhibits  normal 
concentration of measure over long time intervals and in equilibrium.

For probability measures $\mu_1, \mu_2$ on $(S,\P(S))$, the
{\it total variation distance} between $\mu_1$ and $\mu_2$ is given by
$$ \dtv (\mu_1, \mu_2 ) = \frac12 \sum_{x \in S } | \mu_1 (x) - \mu_2
(x)|= \sup_{A \subseteq S} |\mu_1 (A) - \mu_2 (A)|.$$
It is well known
 that the total variation distance satisfies
$$ \dtv (\mu_1, \mu_2) = \inf_{\pi} \pi (X \not = Y),$$
where the infimum is over all couplings $\pi = \cL (X,Y)$ of
$S$-valued random variables $X,Y$ such that the marginals are 
$\cL (X) = \mu_1$ and $\cL (Y) = \mu_2$. 

The {\em Wasserstein
distance} between probability measures $\mu_1$ and $\mu_2$ is defined as
$$\dw (\mu_1 , \mu_2) = \sup_f \left | \int f d\mu_1 - \int f d \mu_2 \right
|= \sup_f |\mu_1 (f)-\mu_2 (f)|,$$
where the supremum is over all measurable 1-Lipschitz
functions $f: S \to \R$.
By the Kantorovich -- Rubinstein theorem (see~\cite{d89}, Section 11.8),
$$\dw = \inf_{\pi} \{ \pi [d (X,Y)]: \cL(X)=\mu_1, \cL(Y)
= \mu_2 \},$$ 
where the infimum is taken over all couplings $\pi$ on $S
\times S$ with
marginals $\mu_1$ and $\mu_2$, and we write $\pi [d(X,Y)]$ for the
expectation of $d(X,Y)$ under the coupling $\pi$. It is well known that the Wasserstein
distance metrises weak
convergence in spaces of bounded diameter. Also, since the discrete
space $(S, \cP(S))$
is necessarily complete and separable, so is the space of probability
measures on $(S, \cP(S))$ metrised by the Wasserstein distance.
See~\cite{r91} for
detailed discussions of various metrics on probability measures and
relationships between them.

\medskip

\section{Examples of rapid mixing and concentration}

In this section we give some examples of known Markov chains
exhibiting both concentration of measure in equilibrium and rapid mixing.

\label{sec:examples}

\subsection{Mean-field Ising model}

\label{sec:ising}

Let $G = (V, {\mathcal E})$ be a finite graph.  Elements of the state
space $S \deq \{-1,1\}^V$ will be called \emph{configurations}, and 
for $\sigma \in S$, the value $\sigma(v)$ will be called the 
\emph{spin} at $v$.  The \emph{nearest-neighbour energy} $H(\sigma)$ of
a configuration $\sigma \in \{-1,1\}^V$ is defined by
\begin{equation} \label{eq.energy}
  H(\sigma) 
    \deq -\sum_{\substack{v,w \in V,\\ v \sim w}} 
      J(v,w) \sigma(v)\sigma(w),   
\end{equation}
where $w \sim v$ means that $\{w,v\} \in \ed$.  The parameters
$J(v,w)$ measure the interaction strength between vertices;
we will always take $J(v,w) \equiv J$, where $J$ is a positive
constant.

For $\beta \geq 0$, the \emph{Ising model} on the graph $G$ with
parameter $\beta$ is the probability measure $\pi$
on $S$ given by 
\begin{equation} \label{eq.gibbs}
  \pi(\sigma) = \frac{e^{-\beta H(\sigma)}}{Z(\beta)},
\end{equation}
where $Z(\beta) = \sum_{\sigma \in \X} e^{-\beta H(\sigma)}$
is a normalising constant.

The parameter $\beta$ is interpreted physically as the inverse 
of temperature, and measures the influence of the energy
function $H$ on the probability distribution.  
At \emph{infinite temperature}, corresponding to $\beta = 0$,
the measure $\pi$ is uniform over $S$ and the random
variables $\{ \sigma(v) \}_{v \in V}$ are independent.

The (single-site) \emph{Glauber dynamics} for $\pi$ is the Markov
chain $(X_t)$ on $S$ with transitions as follows. When at $\sigma$, a
vertex $v$ is chosen uniformly at random from $V$, and a new
configuration is generated from $\pi$ conditioned on the set  
\begin{equation*}
  \{ \eta \in S \st \eta(w) = \sigma(w), \; w \neq v \}.
\end{equation*}
In other words, if vertex $v$ is selected, the new configuration will 
agree with $\sigma$ everywhere except possibly at $v$, and at
$v$ the spin is $+1$ with probability 
\begin{equation} \label{eq.glauber}
  p(\sigma ; v) 
    \deq \frac{ e^{\beta M^v(\sigma)} }{ 
                e^{\beta M^v(\sigma)} 
		  + e^{-\beta M^v(\sigma)} },
\end{equation}
where $M^v(\sigma) \deq J\sum_{w \,:\, w\sim v} \sigma(w)$.
Evidently, the distribution of the new spin at $v$ depends only on the 
current spins at the neighbours of $v$.  It is easily seen
that $(X_t)$ is reversible with respect to the measure
$\pi$ in \eqref{eq.gibbs}, which is thus its stationary measure.

Given a sequence  $G_n = (V_n, E_n)$ of graphs, write $\pi_n$ for
the Ising measure and $(X^{(n)}_t)$ for the
Glauber dynamics on $G_n$. 
For a given configuration $\sigma \in S_n$, let $\cL (X^{(n)}_t,
\sigma)$ denote the law of $X^{(n)}_t$ starting from $\sigma$.
The worst-case distance to stationarity of the Glauber
dynamics chain after $t$ steps is  
\begin{equation} \label{eq.disttostat}
  d_n(t) 
    \deq \max_{\sigma \in S_n} \dtv( \cL ( X^{(n)}_t, \sigma),
      \pi_n).
\end{equation}
The \emph{mixing time}
$\tmix(n)$ is defined as 
\begin{equation} \label{eq.tmix-defn}
  \tmix(n) \deq \min\{t \,:\, d_n(t) \leq 1/4 \}.
\end{equation}
Note that $\tmix (n)$ is finite for each fixed $n$ since, 
by the convergence theorem for ergodic Markov chains,
$d_n(t) \rightarrow 0$ as $t \rightarrow \infty$.
Nevertheless, $\tmix (n)$ will in general tend to infinity with $n$.
It is natural to ask about the growth rate of the sequence
$\tmix(n)$.

\begin{definition}
\label{defn-cutoff}
The Glauber dynamics is said to exhibit a \emph{cut-off}
at $\{t_n\}$ with \emph{window size} $\{w_n\}$ if $w_n = o(t_n)$ and
\begin{align*}
  \lim_{\cn \rightarrow \infty} \liminf_{n \rightarrow \infty} 
    d_n(t_n - \cn w_n) & = 1, \\
  \lim_{\cn \rightarrow \infty} \limsup_{n \rightarrow \infty}
    d_n(t_n + \cn w_n) & = 0.
\end{align*}
\end{definition}

Informally, a cut-off is a sharp threshold for mixing.
For background on mixing times and cut-off, see \cite{lpw}.

Here we consider the mean-field case, taking $G_n$ to be $K_n$, the
complete graph on $n$ vertices. 
That is, the vertex set is $V_n = \{1,2,\ldots,n\}$, and the edge set
${\mathcal E}_n$ contains all $\binom{n}{2}$ pairs $\{i, j\}$ for 
$1 \leq i < j \leq n$.  We take the interaction parameter
$J$ to be $1/n$; in this case, the Ising measure
$\pi$ on $\{-1,1\}^n$ is given by
\begin{equation} \label{eq.gibbs-cw}
  \pi(\sigma) 
    = \pi_n (\sigma) 
    = \frac{1}{Z(\beta)} 
      \exp\left( \frac{\beta}{n}\sum_{1 \leq i < j \leq n} 
        \sigma(i)\sigma(j) \right).
\end{equation}
In the physics literature, this is usually referred to as the
\emph{Curie-Weiss} model.  
To put this into the framework introduced in Section~\ref{sec:not},
the state space $S$ consists of all $n$-vectors with components taking
values in $\{-1,1\}$, and two vectors are adjacent if they differ in
exactly one co-ordinate.

It is a consequence of the Dobrushin-Shlosman uniqueness criterion
that $\tmix(n) = O(n\log n)$ when $\beta < 1$; see \cite{AH}. (See
also~\cite{bd97,weitz}). We shall see in Section~\ref{sec:conc} that, in the
same regime, the stationary measure $\pi$ (the Gibbs measure) exhibits
normal concentration of measure 
for Lipschitz functions in the following sense. Let $X^{(n)}$ be a
stationary version of $X_t^{(n)}$. Then, for some constants
$c,C>0$, for all $u > 0$,
\begin{equation}
\label{eqn.ising-conc}
\P_\pi (|f(X^{(n)}) - \E_\pi (f(X^{(n)})) \ge u) \le Ce^{-u^2/cn},
\end{equation}
uniformly over all 1-Lipschitz  functions on $S$ and over all $n$. 
Thinking about~(\ref{eqn.ising-conc}) simply as a statement about the
measure $\pi$ without any mention of the process $X_t^{(n)}$, we can
also rewrite it in the form
$$\pi (\{\sigma: |f(\sigma)-\pi (f)| \ge u \}) \le C e^{-u^2/cn}.$$
Inequality~(\ref{eqn.ising-conc}) will follow from
Theorem~\ref{thm.conca-general}~(i), and is an improvement on
Proposition 2.7 in~\cite{llp08}.

More precise results
about the speed of mixing for $\beta < 1$ can be found
in~\cite{llp08}, where the occurrence of a cut-off is established.
 The following is Theorem 1
from~\cite{llp08}:

\begin{theorem} \label{thm.hightemp}
  Suppose that $\beta < 1$.  The Glauber dynamics for the Ising
  model on $K_n$ has a cut-off at $t_n = [2(1-\beta)]^{-1}n\log n$ with
  window size $n$. 
\end{theorem}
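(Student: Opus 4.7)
My plan is to exploit the fact that for the Curie--Weiss Glauber dynamics the magnetization $S_t = \sum_{i=1}^n \sigma_i(t)$ is essentially the only slow observable, and to prove matching upper and lower bounds on $d_n(t)$ via two complementary arguments. First I would derive the drift of $S_t$. Since $p(\sigma;v)=\tfrac12(1+\tanh(\beta M^v(\sigma)))$ with $M^v(\sigma)=(S-\sigma(v))/n$, averaging over the uniformly chosen update vertex gives
\begin{align*}
\E[S_{t+1}-S_t \mid \sigma_t] &= \frac{1}{n}\sum_v \tanh\!\bigl(\beta(S_t-\sigma_t(v))/n\bigr) - \frac{S_t}{n} \\
&= -\frac{1-\beta}{n}S_t + O\!\Bigl(\tfrac1n + \tfrac{|S_t|^3}{n^3}\Bigr),
\end{align*}
so $S_t$ contracts at rate $(1-\beta)/n$ near zero, and the time $t_n=n\log n/[2(1-\beta)]$ is exactly that required to drive the mean magnetization from $n$ down to the equilibrium fluctuation scale $\sqrt n$. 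Moreover, by the permutation symmetry of the dynamics, $(S_t)$ is itself a Markov chain on $\{-n,-n+2,\ldots,n\}$.

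For the lower bound I would start at the all-plus configuration $\mathbf 1$. Iterating the drift estimate, with the early phase $|S_t|/n = \Theta(1)$ handled via the uniform inequality $\tanh(\beta x)<x$ on $(0,1]$ for $\beta<1$, gives $\E_{\mathbf 1}[S_t] \sim n(1-(1-\beta)/n)^t$; a standard quadratic-martingale computation based on per-step increment variance $O(1)$ then yields $\Var_{\mathbf 1}(S_t)=O(n)$, and the same bound under $\pi_n$ follows from the high-temperature Gaussian scaling limit of $S/\sqrt n$. At $t=t_n-\cn n$ we have $\E_{\mathbf 1}[S_t]\sim \sqrt n\, e^{\cn(1-\beta)}$, so by Chebyshev the event $\{S_t>K\sqrt n\}$ has probability tending to $1$ under $\P_{\mathbf 1}$ and to $0$ under $\pi_n$ for $K$ and $\cn$ large, which forces $d_n(t_n-\cn n)\to 1$.

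For the upper bound I would exploit the fact that $(S_t)$ is a lumped Markov chain, and show by coupling two of its copies started at $n$ and $-n$ that its own mixing time is $t_n+O(n)$ with the same constant; the key input is that the coupling time of this one-dimensional birth--death chain is $\asymp n\log n/[2(1-\beta)]$ with $O(n)$ fluctuations, by analysis of its drift and increment variance. One then couples the full configuration to stationarity in two phases: run the monotone grand coupling until the magnetizations agree, then use the fact that $\pi_n$ conditioned on $S$ is exchangeable and that the configurational dynamics on a fixed magnetization level set mixes in $O(n)$ steps by a separate, symmetric coupling argument. This yields $d_n(t_n+\cn n)\to 0$ as $\cn\to\infty$.

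The main obstacle is the sharp two-scale analysis: mixing occurs on the macroscopic scale $t_n$ set by the linearized magnetization drift, but must be complemented by microscopic mixing at the $O(n)$ scale in order to reach total variation $o(1)$. Simply bounding the Hamming-distance expectation $\E[S_t^X-S_t^Y]$ in the monotone coupling only yields $\tmix\le 2t_n+O(n)$, with the wrong constant; obtaining the sharp constant requires passing through the lumped chain. Controlling the deterministic-to-diffusive transition of $S_t$ uniformly in the starting state, propagating variance bounds through the full trajectory, and quantifying the conditional mixing given the magnetization, are the technical core of the argument.
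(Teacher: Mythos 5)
The paper does not prove Theorem~\ref{thm.hightemp} itself --- it is quoted verbatim as Theorem~1 of \cite{llp08}, so there is no ``paper's own proof'' against which to compare. Measured against the argument actually given in \cite{llp08}, your sketch reproduces the correct high-level architecture: the magnetization $S_t$ is the slow variable, the lower bound is by using $S_t$ as a distinguishing statistic with a second-moment estimate, and the upper bound is a two-phase coupling that first matches magnetizations and then matches configurations. Your drift computation for $S_t$ and the resulting identification of $t_n = n\log n/[2(1-\beta)]$ are correct.

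The genuine gap is in your phase-2 upper bound step, where you appeal to ``the configurational dynamics on a fixed magnetization level set'' mixing in $O(n)$ steps. Single-site Glauber dynamics does not preserve the magnetization, so there is no such restricted dynamics, and $\pi_n$ conditioned on $S$ being exchangeable is not directly usable. What is actually required --- and is the technical core of the proof in \cite{llp08} --- is a coupling of the two full chains that (a) keeps the two magnetizations equal once they have met, and (b) simultaneously drives the number of disagreement sites to zero in $O(n)$ steps, all while the common magnetization keeps fluctuating. This is done by pairing up disagreement sites of opposite type (sites where $\sigma=+,\tau=-$ against sites where $\sigma=-,\tau=+$; the two sets have equal cardinality precisely because the magnetizations agree) and making coordinated updates, together with a ``good set'' argument showing the magnetization stays $O(\sqrt{n\log n})$ throughout. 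Your phrasing hides exactly where the coupling construction has to work hard. A secondary, lesser gap: the claim $\Var_{\mathbf 1}(S_t) = O(n)$ uniformly in $t$ through the transition from the macroscopic contraction regime to the diffusive regime does need a careful Gr\"onwall-type iteration of the variance recursion; ``a standard quadratic-martingale computation'' is the right idea but the uniformity is where the work is.
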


It is also easy to show, using
the concentration of the Gibbs measure and the method used to prove
Theorem 1.4 in~\cite{lm04b}, that asymptotically the spin values in a
bounded set of vertices become almost independent. (In the language
of~\cite{weitz} -- see also references therein -- this corresponds to the
decay of correlations or spatial mixing.)

On the other hand, in the case $\beta \ge 1$, there is no rapid
mixing, and no cut-off (see~\cite{llp08,dlp08} and references therein): $\tmix (n)$ is of the
order $n^{3/2}$ when 
$\beta =1$ and is exponential in $n$ when $\beta > 1$. For the same
range of $\beta$, the Gibbs measure fails to exhibit normal
concentration. 

In particular, consider the function $m: S \to \R$
given by $m(\sigma)= \sum_{i=1}^n \sigma (i)$, the {\em
magnetisation}; it is easy to see that $\frac12 m$ is 1-Lipschitz, and
$\E_{\pi} (m(X)) = \pi (m) =0$. However, when $\beta >1$, then
there is a constant $c >0$ such that 
$$\pi (\{\sigma: m(\sigma) \ge cn \})=\pi (\{\sigma: m(\sigma) \le - cn
\})\ge 1/4,$$
i.e. $m(X)$ is bi-modal for $\beta > 1$. 
While there is no bi-modality in the case $\beta =1$, it is easy to
calculate directly that $m(X)$ is not concentrated in the sense of~(\ref{eqn.ising-conc}). 
Further, for $\beta \ge 1$,
the spins of vertices are no longer approximately independent for
large $n$.

\medskip

\subsection{Supermarket model}

\label{sec:balls-and-bins}

Consider the following
well-known queueing model with $n$ separate
queues, each with a single server. Customers arrive
into the system in a Poisson process at rate $\lambda n$, where
$0<\lambda <1$ is a constant. Upon arrival each customer chooses
$d$ queues uniformly at random with replacement, and joins a
shortest queue amongst those chosen (where she breaks ties by
choosing the first of the shortest queues in the list of $d$).
Here $d$ is a fixed positive integer. Customers are served
according to the first-come first-served discipline. Service times
are independent exponentially distributed random variables with
mean 1.

A number of authors have studied this model, as well as its
extension to a Jackson network
setting~\cite{g00,g04,lm04,lm04b,ln04,ms99,m96a,m01,vdk96}.

For instance, it is shown by Graham in~\cite{g00} that the system is {\em
chaotic}, provided that it starts close to a suitable
deterministic initial state, or is in equilibrium. This means that
the paths of members of any fixed finite subset of queues are
asymptotically independent of one another, uniformly on bounded
time intervals. This result implies a law of large numbers for the
time evolution of the proportion of queues of different lengths,
that is, for the empirical measure on path space~\cite{g00}. In
particular, for each fixed positive integer $k_0$, as $n$
tends to infinity the proportion of queues with length at least
$k_0$ converges weakly
(when the infinite-dimensional state space is endowed with the
product topology) to a function $v_t(k_0)$, where $v_t(0)=1$ for
all $t \ge 0$ and $(v_t(k): k \in \N)$ is the unique solution
to the system of differential equations
\begin{equation}
\label{eqn.lln}
 {{dv_t(k)} \over {dt}} = \lambda (v_t(k-1)^d-v_t(k)^d) - (v_t(k) -
v_t(k+1))
\end{equation}
for $k \in \N$. Here one needs to assume appropriate initial
conditions $(v_0(k): k \in \N)$ such that $1 \ge v_0(1) \ge
v_0(2) \geq \cdots \ge 0$. Further, again for a fixed positive
integer $k_0$, as $n$ tends to infinity, in the equilibrium
distribution this proportion converges in probability to
$\lambda^{1+d+\cdots +d^{k_0-1}}$, and thus 
the probability that a given queue has length at least $k_0$ also
converges to $\lambda^{1+d+\cdots +d^{k_0-1}}$.

Although the above results refer only to fixed queue length $k_0$ and
bounded time intervals, they suggest that when $d \geq 2$, in
equilibrium the maximum queue length may usually be $O(\log\log n)$.
Indeed, one of the contributions of~\cite{lm04}
 is to show that this is indeed the case, and
to give precise results on the behaviour of the maximum queue
length. In particular, it turns out that when $d \ge 2$, with
probability tending to 1 as $n \rightarrow \infty$, in the
equilibrium distribution the maximum queue length takes at most
two values; and these values are $\log\log n/ \log d +O(1)$. Along the way, it
is also shown in~\cite{lm04}
that the system is rapidly mixing, that is the distribution
settles down quickly to the equilibrium distribution. In this context,
`quickly' will mean `in time $O(\log n)$, as this is a continuous
time process with events happening at rate $n$, and so $O(\log n)$
corresponds to $O(n \log n)$ steps of the discrete-time jump chain. It is further
established in~\cite{lm04} that the equilibrium measure is strongly concentrated.

Another
natural question concerns fluctuations when in the equilibrium
distribution: how long does it take to see large deviations of the
maximum queue length from its stationary median? An answer is provided
in~\cite{lm04} by establishing strong concentration estimates (for
Lipschitz functions of the queue lengths vector) over time
intervals of length polynomial in $n$. The techniques in~\cite{lm04} are partly
combinatorial, and are used also in~\cite{lm03} and~\cite{lm04b}.
In particular, in~\cite{lm04b}, the concentration estimates
obtained in~\cite{lm04} are used to establish quantitative results on the convergence
of the distribution of a queue length and on `propagation of
chaos'.

Let us start by discussing the rapid mixing results known for the
supermarket model.
In~\cite{lm04} two rapid mixing results are established, one in terms of the Wasserstein
distance and one in terms of the total variation distance. Unlike for
the Ising model in Section~\ref{sec:ising}, it turns out to be
inappropriate to be looking at the worst-case mixing time, that
is the supremum of the mixing times over all possible starting
states. In the present case, this quantity is unbounded: the state
space is unbounded, and the time to equilibrium from states $x$ with
the total number of customers $\parallel x \parallel_1 =k \gg n$ is of
the order at least $k$. Then the best
one can do is to obtain good upper bounds on the mixing time for copies of the Markov
chain starting from nice states -- that is, states where the queues
are not too `over-loaded'. This is made more precise below.

Let $X_t^{(n)}$ or $X_t$ be the queue-lengths vector
$(X^{(n)}_t(1), \ldots, X^{(n)}_t (n))$ in the supermarket model with
$n$ servers. For a positive integer $n$,
$(X_t^{(n)})$ is an ergodic continuous-time Markov chain, with a unique
distribution $\pi^{(n)}$ or $\pi$.

For any given state $x$ write $\cL (X_t^{(n)}, x)$ to denote the law
of $X_t^{(n)}$ given $X_0^{(n)}=x$. Also, for $\epsilon > 0$, the
mixing time $\tau^{(n)}(\epsilon,x)$ starting from $x$ us defined by
$$\tau^{(n)} (\epsilon,x) = \inf \{t \ge 0: \dtv (\cL(X_t^{(n)},x),
{\pi}^{(n)}) \le \epsilon \}.$$

The result below, Theorem 1.1 in~\cite{lm04},
shows that starting from an initial state in which
the queues are not too long, the mixing time is small. In particular,
if $\epsilon > 0$ is fixed and ${\bf 0}$ denotes the all-zero
$n$-vector, then $\tau^{(n)}(\epsilon, {\bf 0})$ is $O(\log n)$.

\begin{theorem} \label{thm.stat}
Let $0<\lambda<1$ and let $d$ be a fixed positive integer.
For each constant $c>0$ there exists a constant $\eta >0$ 
such that the following holds for each positive integer $n$.
Consider any distribution of the initial queue-lengths vector
$X^{(n)}_0$, and for each time $t \geq 0$ let
$$\delta_{n,t} = \P(|X^{(n)}_0| > c n) + \P(M^{(n)}_0 > \eta t).$$
Then
$$\dtv ({\mathcal L}({X}^{(n)}_t),{\pi}^{(n)}) \leq
n e^{-\eta t} + 2 e^{-\eta n} + \delta_{n,t}.$$
\end{theorem}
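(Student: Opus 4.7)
The plan is to bound the total-variation distance by a coupling argument. First, by linearity of mixtures it suffices to prove
$\dtv(\cL(X_t^{(n)},x),\pi^{(n)})\le n e^{-\eta t}+2e^{-\eta n}$
whenever the initial state $x\in S$ satisfies $|x|\le cn$ and $\max_i x(i)\le \eta t$; averaging over $\cL(X_0^{(n)})$ and absorbing the probability of failure of these two conditions into $\delta_{n,t}$ then yields the claim. Let $(Y_t^{(n)})$ denote a stationary copy; the task is to construct a coupling $(X_t^{(n)},Y_t^{(n)})$ with $X_0^{(n)}=x$, $Y_0^{(n)}\sim \pi^{(n)}$, and bound $\P(X_t^{(n)}\ne Y_t^{(n)})$.

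The coupling shares a common rate-$\lambda n$ Poisson arrival process, common rate-$1$ service clocks at each of the $n$ servers, and common uniform samples of $d$ queue-indices at each arrival (each chain routing to its own shortest queue in the sample). The right-hand side then splits into two ingredients. First, since $\lambda<1$, the total load has negative drift of order $n$ whenever it exceeds $cn$, so a standard exponential-martingale/Chernoff argument on the embedded jump chain gives $\P(\sup_{s\le t}|X_s^{(n)}|>c'n)\le e^{-\eta n}$ for some $c'=c'(c,\lambda)$, and the same bound holds for $Y_s^{(n)}$ by stationarity and concentration; this supplies the $2e^{-\eta n}$ term. Second, conditional on both chains being $O(n)$-loaded throughout $[0,t]$, I would show that each individual queue couples by time $t$ with probability at least $1-e^{-\eta t}$: the initial excess at the longest queues drains in time linear in $M_0^{(n)}\le\eta t$, and once both chains have bounded queue lengths and nearly identical empirical profiles, a routing-by-rank comparison forces coalescence of each individual queue on an $O(1)$ timescale with an exponential tail coming from an $M/M/1$ idle-time estimate. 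A union bound over the $n$ queues produces the $n e^{-\eta t}$ term.

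The principal obstacle is that the naive synchronous coupling is \emph{not} contracting: if $X_s^{(n)}\ne Y_s^{(n)}$, an arrival whose sampled $d$-tuple contains a coordinate of disagreement may be routed to different queues in the two chains, so new discrepancies can be created faster than old ones are erased. To handle this I would pass to a monotone rank coupling in which the queue-length vectors in both chains are simultaneously sorted in non-increasing order and the arriving customer is placed at the same \emph{rank} in each chain; the sorted vectors then admit a coordinatewise partial order preserved by the dynamics, and one may dominate the pair by a single auxiliary supermarket chain $Z_s^{(n)}$ started from a state with $O(n)$ customers and maximum queue length $O(\eta t)$. The fluid equations~\eqref{eqn.lln} exhibit a geometric contraction rate arising from the $v_s(k)^d$ term together with $\lambda<1$, and exponential concentration around the fluid limit — the key technical input, to be borrowed from~\cite{lm04} — upgrades this contraction to the per-queue exponential tail needed for the union bound.
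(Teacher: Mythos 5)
This statement is Theorem~1.1 of~\cite{lm04}; the present paper cites it and does not re-prove it, but Section~\ref{sec:balls-and-bins} does describe the proof strategy used there. That strategy is a path-coupling argument: one constructs a coupling of two copies of the process started from \emph{adjacent} states (differing in one customer in one queue) under which the $\ell_1$-distance is non-increasing (Lemma~2.3 in~\cite{lm04}, quoted here) and in fact hits zero by time $O(n\log n)$ with high probability (Lemma~2.6). This is then extended to arbitrary pairs of ``good'' starting states by path coupling, and the Wasserstein bound passes to total variation because the discrete metric is at most $d(\cdot,\cdot)$. Your outer structure (condition on $|X_0|\le cn$ and $M_0\le\eta t$, couple to a stationary copy, control the total load with a drift argument) matches, but the core coupling you propose is genuinely different from, and weaker than, the one in~\cite{lm04}.

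The main gap is in the rank coupling. Sorting both queue-length vectors and routing ``by rank'' gives a coupling of the \emph{sorted} processes; when the sorted vectors coincide, the labelled vectors $X_t^{(n)}$ and $Y_t^{(n)}$ need not. But $\dtv(\cL(X_t^{(n)}),\pi^{(n)})$ is a statement about the joint law of the $n$ labelled queue lengths. The stationary measure is exchangeable, yet $\cL(X_t^{(n)},x)$ for a fixed non-symmetric $x$ is not, so you would need a separate, nontrivial argument that the labelling randomises; this is neither sketched nor obviously true on the $O(n\log n)$ timescale. It is also worth noting that the rank-routing rule does not obviously have the correct marginal dynamics for the labelled chain, since the supermarket model samples server indices, not ranks. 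The coupling actually used in~\cite{lm04} acts directly on labelled states and is built so that a single unit of $\ell_1$-discrepancy can never be duplicated (it is in this sense ``monotone''), which is exactly the property you correctly observe is missing from the naive synchronous coupling; you do not supply a replacement for it.

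A secondary concern is the appeal to the fluid equations~\eqref{eqn.lln} for contraction and a per-queue ``$M/M/1$ idle-time'' tail. The theorem is quantitative and must hold for each $n$ with $n$-free constants $\eta$, whereas the fluid picture is an $n\to\infty$ asymptotic; turning it into the required uniform per-queue exponential tail is essentially the content of the lemmas in~\cite{lm04} you cite as ``to be borrowed,'' so this step is where the hard work lives. In short: the scaffolding is sound, but the two ingredients you flag as the key technical inputs --- a contracting coupling of \emph{labelled} adjacent states and quantitative, $n$-uniform contraction --- are the actual proof, and the rank-coupling route you propose for the first one does not deliver the conclusion as stated.
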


The $O(\log n)$ upper bound on the mixing time $\tau$ is of the
right order. Indeed, it is also proven in~\cite{lm04} that,
for a suitable constant $\theta >0$, if $t \leq \theta \log n$ then
\begin{equation} \label{mixlower}
\dtv ({\mathcal L}({X}^{(n)}_t), \pi^{(n)}) = 1-e^{-\Omega
(\log^2 n)}.
\end{equation}
Thus $\tau^{(n)}(\epsilon,{\bf 0})$ is $\Theta(\log n)$ as long as
both $\epsilon^{-1}$ and $(1-\epsilon)^{-1}$ are bounded
polynomially in $n$.

It would be interesting to consider the mixing times more precisely,
to establish whether the supermarket model exhibits a cut-off. Again,
here we should not be considering the worst-case mixing time, but
rather the worst case over a subset of `good' initial states, which are
states where the total number of customers is not too large and the
maximum queue not too long. Also, to bring the supermarket model into
the discrete framework of Section~\ref{sec:not}, let us consider the jump
chain of the supermarket model. We
shall denote the jump chain by $\hat{X}_t^{(n)}$ or $\hat{X}_t$ 
in what follows, and its stationary measure by $\hat{\pi}^{(n)}$ or
$\hat{\pi}$.

The transition probabilities of the
jump chain are as follows. Given the state at time $t$ is $x$, the
next event is an arrival with probability $\lambda/(\lambda +1)$ and is
a {\em potential} departure with probability $1/(\lambda +1)$. Here
`potential' means that it may be a departure or no change of state at
all. Given that the next event is an arrival, the queue to which the
new customer is sent is determined by selecting a uniformly random
$d$-tuple of queues and directing the customer to a shortest queue
among those chosen, in the same way as for the continuous-time process. Given that the
next event is a potential departure, the departure queue is chosen
uniformly at random from among all $n$ queues. Then a customer will
depart if the selected queue is non-empty; otherwise, nothing
happens. It is easy to adapt the proofs in~\cite{lm04} (where the
arguments are, in fact, based on analysing the jump chain) to show
that Theorem~\ref{thm.stat} implies mixing in time of the order $O(n
\log n)$ from initial states $x$ such that $\parallel x \parallel_1
=O(n)$ and $\parallel x \parallel_{\infty}=O(\log n)$.

Accordingly, we make the following conjecture:

\begin{conjecture}
Let $c$ be a positive constant, and let $S_0^{(n)}$ be the set of all
queue-lengths vectors $x$ in the $n$ server supermarket model 
such that $\parallel x \parallel_1 \le c n$ and  $\parallel x
\parallel_{\infty} \le c \log n$. Let $\epsilon > 0$, and let
$$d_n(\epsilon,t) = \sup_{x \in S_0^{(n)}} \dtv (\cL (\hat{X}_t^{(n)}, x),
{\hat \pi}^{(n)}).$$
Then $d_n (\epsilon,t)$ has a cut-off in the sense of
Definition~\ref{defn-cutoff}, with window size $n$.
\end{conjecture}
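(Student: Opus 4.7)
The plan is to prove the cut-off by matching upper and lower bounds on $d_n(\epsilon,t)$, with the cut-off location $t_n$ identified from the fluid-limit dynamics \eqref{eqn.lln}. Throughout, I would exploit the existing combinatorial coupling machinery of~\cite{lm04} together with the normal concentration of $\hat\pi^{(n)}$ provided by the results of Section~\ref{sec:conc}.

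\emph{Step 1: locating $t_n$.} The fluid equation \eqref{eqn.lln} has unique equilibrium $v^*_k = \lambda^{1+d+\cdots+d^{k-1}}$; linearising about $v^*$ yields a bounded operator $A$ on a suitably weighted sequence space whose spectral gap $\alpha = \alpha(\lambda,d) > 0$ governs the slowest exponential relaxation. A natural candidate is $t_n = \theta\, n\log n$ with $\theta = (\lambda+1)/(2\alpha)$: the factor $\lambda+1$ converts continuous-time units (the global jump rate is $(\lambda+1)n$) into jump-chain steps, and the factor $2$ reflects the fact that the distinguishing statistic fluctuates on scale $\sqrt n$ under $\hat\pi^{(n)}$.

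\emph{Step 2: upper bound on $d_n(t_n + \cn n)$.} Couple two copies of $\hat X^{(n)}$, one started from $x \in S_0^{(n)}$ and one stationary, via the standard path coupling of~\cite{lm04} with common arrival times and common uniform $d$-tuples. A weighted Lyapunov function on the queue-length profile should yield contraction of the Wasserstein distance at rate $\alpha/n$ per step, so $\dw(\cL(\hat X_{t}^{(n)},x),\hat\pi^{(n)}) \le e^{-\cn\alpha/(\lambda+1)}\sqrt n$ at $t = t_n + \cn n$. To upgrade this Wasserstein bound to a total variation bound, I would invoke Theorem~\ref{thm.conca-general}(i), which shows that $\hat\pi^{(n)}$ has $\sqrt n$-scale fluctuations for every $1$-Lipschitz test function; a short smoothing/discretisation argument on scale $o(\sqrt n)$ then converts the Wasserstein bound into $\dtv\to 0$ as $\cn\to\infty$, uniformly in $x\in S_0^{(n)}$.

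\emph{Step 3: lower bound on $d_n(t_n - \cn n)$.} Choose a worst-case $x^*\in S_0^{(n)}$ (for instance, all queues empty) and a $1$-Lipschitz distinguishing statistic $f = \sum_{k\ge 1} w_k s_k$, where $s_k(x)=\#\{i:x_i\ge k\}$ and the weights $(w_k)$ are chosen (so that $\sum_k|w_k|\le 1$) to match a left eigenvector of $A$ associated with the spectral gap. A quantitative fluid approximation, refining~\cite{g00,lm04}, should give
\[
\bigl|\E_{x^*} f(\hat X_t^{(n)}) - \hat\pi^{(n)}(f)\bigr| \;\ge\; C n\, e^{-t\alpha/((\lambda+1)n)},
\]
which at $t=t_n-\cn n$ equals $C\sqrt n\, e^{\cn\alpha/(\lambda+1)}$. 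Since $f(\hat X^{(n)}_{t_n-\cn n})$ is concentrated on scale $\sqrt n$ both under $\hat\pi^{(n)}$ and (again by Theorem~\ref{thm.conca-general}(i) applied along the chain) under $\cL(\hat X_{t_n-\cn n}^{(n)}, x^*)$, comparing the two distributions through the interval $[\hat\pi^{(n)}(f)-c\sqrt n,\hat\pi^{(n)}(f)+c\sqrt n]$ forces $\dtv\to 1$ as $\cn\to\infty$.

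\emph{Main obstacles.} The hardest ingredient is the refined fluid approximation, accurate to the $\sqrt n$ scale over windows of $O(n)$ jump steps: the estimates in~\cite{lm04} establish $O(\log n)$ mixing but are too crude for a cut-off window analysis. One needs a diffusion-type limit theorem for the fluctuations around $v^*$, uniform in starting conditions in $S_0^{(n)}$, and this requires a careful spectral analysis of the infinite-dimensional operator $A$ on an appropriately weighted space so that all but finitely many modes decay strictly faster than $\alpha$ (otherwise the cut-off constant $\theta$ might not be the right one). A second, more technical difficulty is the unboundedness of the state space: one must show that with probability $1-o(1)$ the chain remains inside a larger `good' region throughout the window, so that the contraction and concentration arguments apply uniformly.
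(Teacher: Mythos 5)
This statement appears in the paper as a \emph{Conjecture}, not a theorem: the author explicitly writes that the supermarket jump chain ``appears a good candidate for cut-off, though proving it may not be easy,'' and offers only heuristic support (analogy with Conjecture~1 of~\cite{llp08}, symmetry/exchangeability, $O(n\log n)$ mixing). There is therefore no proof in the paper to compare your proposal against; anything you write here cannot be ``the same approach as the paper'' or ``a different route'' --- it is an attempt at a genuinely open problem.

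Taken on its own terms, your outline is a reasonable research program but not a proof, and you yourself flag the central missing ingredient. Let me be concrete about the gaps. (a) The identification $t_n = \theta\, n\log n$ with $\theta = (\lambda+1)/(2\alpha)$ rests on an undefined spectral gap $\alpha$ of a linearised infinite-dimensional operator; you have not specified the weighted sequence space, shown $A$ is bounded with a spectral gap there, or ruled out accumulation of spectrum at the gap (which you note could change the constant). (b) In Step~2 the asserted per-step Wasserstein contraction at rate $\alpha/n$ from a ``weighted Lyapunov function'' is not established; the coupling estimates actually available in~\cite{lm04} (Lemma~2.6 there, quoted as~\eqref{eqn.superm-contract}) give contraction only after $\Omega(n\log n)$ steps and with a constant that is not tied to any spectral quantity. (c) The Wasserstein-to-TV upgrade is a genuine obstacle, not a ``short smoothing argument'': a bound $\dw(\cL(\hat X_t,x),\hat\pi) = o(\sqrt n)$ plus $\sqrt n$-scale concentration of $\hat\pi$ under Lipschitz test functions does not by itself control total variation, since TV is a supremum over indicators of arbitrary sets and can remain close to $1$ even when all Lipschitz statistics nearly agree. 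Some form of local CLT or regularity of $\hat\pi$ on the lattice would be needed, and none is available. (d) In Step~3 you need a quantitative drift estimate $|\E_{x^*} f(\hat X_t) - \hat\pi(f)| \ge C n\, e^{-t\alpha/((\lambda+1)n)}$ uniformly over the window; the fluid-limit results~\eqref{eqn.lln} from~\cite{g00} give convergence on fixed finite time horizons only, and the refinements in~\cite{lm04} do not deliver $\sqrt n$-accuracy over $\Theta(n\log n)$ jump steps. You correctly identify this as the hardest ingredient, but without it both the lower bound and the precise constant $\theta$ collapse. In short: the proposal is a sensible sketch of how one \emph{might} attack the conjecture, echoing the strategy used for Curie--Weiss in~\cite{llp08}, but at present every one of its quantitative pillars is unproved, so it does not constitute a proof.
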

Our conjecture appears supported by some simulation results. Also it
is supported by Conjecture 1 from~\cite{llp08}, which states that
the Glauber dynamics for the Ising model on transitive graphs $G_n$ has a cutoff if the
mixing time is $O(n \log n)$. The jump chain of the supermarket
process is of a similar type to Glauber dynamics in that it makes only
local transitions, and has mixing time of the order $O(n \log n)$,
starting
from good initial states. Also, it has a lot of symmetry -- its
stationary distribution is exchangeable. Thus the supermarket chain
appears a good candidate for cut-off, though proving it may not be easy. 

More generally, perhaps cut-off can be proven to be a phenomenon that
also co-occurs with rapid  mixing and concentration of measure in
equilibrium much more widely, in the context of Markov chains whose
jumps are suitably local.

\medskip

In~\cite{lm04}, the authors upper bound mixing in terms of the total
variation distance by first upper bounding the Wasserstein distance between the
distribution of the process at time $t$ and the stationary distribution.
The following result is Lemma 2.1 in~\cite{lm04}.

\begin{theorem} \label{thm.statW}
Let $0<\lambda<1$ and let $d$ be a fixed positive integer. For
each constant $c>\frac{\lambda}{1-\lambda}$
there exists a constant $\eta >0$ 
such that the following holds for each positive integer $n$. Let
$M$ denote the stationary maximum queue length.
Consider any distribution of the initial queue-lengths vector
$X_0$ such that $|X_0|$ has finite mean. For each time
$t \geq 0$ let
$$\delta_{n,t} = 2 \E[|X_0| {\bf 1}_{|X_0|>cn}]+
2cn \ \P(M_0 > \eta t).$$
Then
$$\dw ({\mathcal L}({X}_t), \pi) \leq
n e^{-\eta t} + 2c n \P_{\pi}(M> \eta t)+ 
2e^{-\eta n} + \delta_{n,t}.$$
\end{theorem}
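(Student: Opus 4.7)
The plan is to bound $\dw(\cL(X_t), \pi)$ via a coupling with a stationary copy $(Y_t)$ of the supermarket process, $\cL(Y_0) = \pi$. By the Kantorovich--Rubinstein representation recalled in Section~\ref{sec:not},
\begin{equation*}
\dw(\cL(X_t), \pi) \le \E[d(X_t, Y_t)] = \E\bigl[\|X_t - Y_t\|_1\bigr],
\end{equation*}
since adjacent queue-length vectors differ by a single customer in one coordinate, so the graph metric coincides with the $\ell_1$ distance. I would run $(X_t)$ and $(Y_t)$ on a common probability space with a \emph{synchronous coupling}: one Poisson arrival process of rate $\lambda n$ and one Poisson potential-departure process of rate $n$ drive both chains. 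At each arrival both chains inspect the same ordered $d$-tuple of queues and route the customer to the first shortest queue in that tuple; at each potential departure both chains select the same queue index and attempt a departure.

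I would then split on the good event
\begin{equation*}
\cG = \{|X_0| \le cn,\; M_0 \le \eta t\} \cap \{|Y_0| \le cn,\; \tilde M_0 \le \eta t\},
\end{equation*}
where $\tilde M_0$ denotes the maximum queue length of $Y_0$. On $\cG^{\mathrm c}$ one uses the crude bound $\|X_t - Y_t\|_1 \le |X_t| + |Y_t|$ together with the fact that the expected total number of customers at time $t$ under either chain is controlled by its initial total. The events $\{|X_0| > cn\}$ and $\{M_0 > \eta t\}$ furnish the two pieces of $\delta_{n,t}$; $\{\tilde M_0 > \eta t\}$ produces $2cn\,\P_\pi(M > \eta t)$ via stationarity of $Y$; and $\P_\pi(|Y_0| > cn) \le e^{-\eta n}$ follows from a Chernoff-type large-deviations bound for the stationary total queue length, which has exponential tails once $c > \lambda/(1 - \lambda)$ strictly exceeds the mean.

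On $\cG$ the synchronous coupling should contract $Z_t := \|X_t - Y_t\|_1$ in expectation at rate $\eta$: a potential departure decreases $Z_t$ by $1$ whenever exactly one of the two selected coordinates is zero, and although an arrival can raise $Z_t$ by up to $2$ when the two chains find different shortest queues in the inspected $d$-tuple, a careful coordinate-by-coordinate bookkeeping should give $\frac{\mathrm d}{\mathrm d t}\E[Z_t \mathbf{1}_\cG] \le -\eta \E[Z_t \mathbf{1}_\cG]$. Starting from $Z_0 \le 2cn$ on $\cG$, a Gronwall argument then yields $\E[Z_t \mathbf{1}_\cG] \le 2cn\, e^{-\eta t}$, contributing the $ne^{-\eta t}$ term after adjusting $\eta$.

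The principal obstacle is the contraction step: the ``shortest of $d$'' rule is nonlinear, so an arrival can transiently increase $\|X_t - Y_t\|_1$, and one must verify that, averaged over the uniformly random $d$-tuple of inspected queues, the expected change in $Z_t$ is strictly negative with a rate $\eta$ uniform in $n$, conditional on $\|X_s\|_\infty$ and $\|Y_s\|_\infty$ remaining $O(\eta t)$ throughout $[0,t]$. This drift calculation, comparing the expected per-arrival growth of $Z_t$ with the one-per-step decrease supplied by departures, is the heart of the argument and relies on the specific combinatorics of the supermarket dynamics. Once this rate is established, the remaining pieces follow from standard Poisson-process estimates and the large-deviations bound for $|Y_0|$ under $\pi$.
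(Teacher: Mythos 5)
Your overall architecture (Kantorovich--Rubinstein, good-event decomposition, splitting the tail into the four pieces that make up $\delta_{n,t}$ and the two stationary terms) matches the paper's. The genuine gap is in the heart of the argument: the claimed drift inequality $\frac{\mathrm d}{\mathrm d t}\E[Z_t \mathbf{1}_\cG] \le -\eta\,\E[Z_t \mathbf{1}_\cG]$ for $Z_t = \|X_t - Y_t\|_1$ under the synchronous coupling is not available, and is in fact false in general. The total rate at which departures decrease $Z_t$ is bounded by $\#\{i : X_t(i) \neq Y_t(i)\}$ --- a departure at a coordinate where both chains are nonzero leaves $|X_t(i)-Y_t(i)|$ unchanged --- so a single coordinate with a large discrepancy contributes rate $O(1)$, not rate proportional to $Z_t$. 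Already for $d=1$, starting from $X_0=(m,0,\dots)$ and $Y_0=(0,0,\dots)$, coalescence under the synchronous coupling takes time of order $m$, not $\log m$. No Gronwall argument based on multiplicative contraction of $\ell_1$ can close the gap.

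What \cite{lm04} actually does (Lemma 2.1 there, sketched in Section~\ref{sec:balls-and-bins} of this paper) is different: a monotone coupling is built for a pair of \emph{adjacent} states, i.e. states at graph distance $1$, and for such pairs the coupled $\ell_1$ distance is \emph{non-increasing}: it stays exactly $1$ until coalescence, never growing to $2$. One then shows that the coalescence time of an adjacent pair starting in a good region has exponential tails, $\P(T > t) \le e^{-\eta t}$, which gives $\dw(\delta_u P^t, \delta_v P^t) \le e^{-\eta t}$ uniformly over adjacent good $u,v$. The extension to non-adjacent initial states is by the Wasserstein triangle inequality along a path in the state graph (path coupling): for $x,y$ with $d(x,y) = k$ and the path confined to the good region, $\dw(\delta_x P^t, \delta_y P^t) \le k\,e^{-\eta t}$, which with $k \le 2cn$ yields the $ne^{-\eta t}$ term. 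The exponential decay arises from a coalescence-time tail estimate and summation along a path, not from a Lyapunov-type multiplicative drift. Your proposal never invokes this reduction to adjacent pairs or the non-increasing-distance property, and replacing it by a direct contraction estimate between two arbitrary states is exactly the step that fails. A secondary issue is that your $\cG$ constrains only the initial states; the coalescence-rate estimate for adjacent pairs needs the queue lengths to remain controlled throughout $[0,t]$ (you flag this in the ``obstacle'' paragraph but it should be built into the good event and the tail bound that covers its complement).
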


The upper bounds on the Wasserstein and total variation distance, and
thus on the mixing time, are proven in~\cite{lm04} by means of a
monotone coupling. The coupling takes two
copies of the queueing process starting in adjacent states (that is,
states differing in one customer in one queue) and couples
their paths together in such a way that the $\ell_1$-distance between
them is non-increasing (and so always stays equal to 1 until the
processes coalesce). Furthermore, the coupling is such that with 
high probability the $\ell_1$-distance rapidly becomes 
0.  
The coupling is
then extended to all pairs of starting states with not too many
customers in queues using the fact that the Wasserstein distance is a
metric on the space of probability measures, or 
a {\em path-coupling} argument~\cite{bd97}.

The property that the $\ell_1$-distance is non-increasing in the
coupling in~\cite{lm04} is very
strong and not commonly encountered in path-coupling scenarios. This
property is exploited in~\cite{lm04} to prove strong
concentration of measure for the supermarket process, starting from a fixed (or
highly concentrated state) for a long time interval. The following is
Lemma 4.3 in~\cite{lm04}.

\begin{lemma}
\label{lem.conc} There is a constant $c >0$ such that the
following holds. Let $n \ge 2$ be an 
integer and let $f$ be a
1-Lipschitz function on the state space (set of all queue lengths
vectors) $S$. Let also $x_0 \in S$
and assume that the queue-lengths process $(X_t)$ satisfies $X_0 =
x_0$ a.s. Let $\mu_t = \E_{\delta_{x_0}} [f(X_t)]$.
Then for 
all times $t> 0$ and all $u \geq 0$,
\begin{equation} \label{eqn.extra2}
\P_{\delta_{x_0}} (|f(X_t) - \mu_t| \geq u) \leq n e^{-\frac{c u^2}{nt+u}}.
\end{equation}
\end{lemma}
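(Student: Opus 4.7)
The plan is to apply a martingale concentration inequality of Bennett type to the Doob martingale
$$M_s \;=\; \E_{\delta_{x_0}}\!\bigl[f(X_t) \mid \cF_s\bigr], \qquad 0 \le s \le t,$$
for the continuous-time supermarket chain, which satisfies $M_0 = \mu_t$ and $M_t = f(X_t)$, so that it suffices to bound $|M_t - M_0|$. The essential input is the monotone coupling of~\cite{lm04} recalled just before the lemma: for any pair of queue-length vectors $x,y$ at $\ell_1$-distance $1$, two copies of the supermarket process starting from $x$ and $y$ can be coupled so that their $\ell_1$-distance stays $\le 1$ for all time. Since $f$ is $1$-Lipschitz for the graph metric $d$, and $d$ coincides with the $\ell_1$-distance on $S$ (because any two states are connected by a path of arrivals and departures of length equal to their $\ell_1$-distance), this gives $|\E_{\delta_{x}}[f(X_r)] - \E_{\delta_{y}}[f(X_r)]| \le 1$ for every $r \ge 0$.

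The continuous-time chain has events (arrivals or potential departures) at total rate $(1+\lambda)n$, so $(M_s)$ is a pure-jump martingale whose jumps occur only at event times. At such a time $s$, the states $X_{s^-}$ and $X_s$ differ in one queue by one unit, so applying the strong Markov property to $\E[f(X_t) \mid X_s]$ and $\E[f(X_t) \mid X_{s^-}]$ together with the coupling bound yields $|\Delta M_s| \le 1$. The predictable quadratic variation therefore satisfies $\langle M \rangle_t \le (1+\lambda)nt$, and a Bennett/Freedman-type martingale concentration inequality produces
$$\P_{\delta_{x_0}}\!\bigl(|f(X_t) - \mu_t| \ge u\bigr) \;\le\; 2\exp\!\Bigl(-\tfrac{u^2}{2((1+\lambda)nt + u)}\Bigr),$$
which has exactly the Bennett shape required by the lemma after absorbing $(1+\lambda)$ into $c$ and noting that the prefactor $n$ in the stated inequality is a trivial weakening of~$2$.

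The main obstacle is technical: carrying out the continuous-time martingale argument rigorously requires $M_s$ to be well-defined and square-integrable, which needs a moment bound on $f(X_t)$ obtained by stochastically dominating queue lengths by an $M/M/\infty$-type process with exponential tails. A more elementary route, closer to the ``discrete and coupling-based'' spirit of the paper, is to condition on the Poisson number $N_t \sim \Po((1+\lambda)nt)$ of events in $[0,t]$, view the continuous-time state as a function of the $N_t$ jump randomizers of the jump chain, apply discrete-time Azuma--Hoeffding with bounded differences provided by the coupling, and then remove the conditioning by combining with Poisson concentration for $N_t$ (noting that $\E[f(X_t) \mid N_t]$ is itself $1$-Lipschitz in $N_t$, since one extra event moves the state by one step and the coupling is again available). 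Balancing the two tail contributions reproduces the form $\exp(-cu^2/(nt+u))$, and the bookkeeping in this combination step -- in particular choosing the threshold on $N_t$ so that the Gaussian and exponential regimes match up cleanly -- is where I would expect the real work to lie.
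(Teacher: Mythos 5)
Your proposal is essentially correct, and it offers two routes, one of which coincides with the route the paper attributes to~\cite{lm04}, and one of which is genuinely different.

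Your ``secondary'' route -- conditioning on the Poisson number $N_t$ of events, representing $X_t$ conditionally as a function of $N_t$ independent jump randomizers, invoking the non-increasing-$\ell_1$-distance coupling of~\cite{lm04} to get bounded differences, applying McDiarmid/Azuma in the conditional space, and deconditioning via Poisson concentration for the $1$-Lipschitz function $m \mapsto \E[f(X_t)\mid N_t=m]$ -- is precisely the proof the paper describes in the paragraph following the lemma. You have identified the two key ingredients (the deterministic coupling giving bounded differences, and the Lipschitz dependence of the conditional mean on $N_t$) correctly; the ``balancing'' step you flag is indeed where the $nt+u$ shape in the exponent comes from.

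Your ``primary'' route -- treating $M_s=\E_{\delta_{x_0}}[f(X_t)\mid\cF_s]$ as a continuous-time pure-jump martingale with $|\Delta M_s|\le 1$ (from the coupling) and $\langle M\rangle_t\le (1+\lambda)nt$, then applying a Freedman-type inequality -- is a genuinely different and in some ways cleaner argument: it avoids the two-stage conditioning/deconditioning and directly produces a Bennett-shaped tail with a constant prefactor. This is closer in spirit to the discrete-time Doob-martingale mechanism used for the new results of Section~\ref{sec:conc} (Theorems~\ref{thm.conca-general} and~\ref{thm.concb-general}), though those work with the jump chain and use Wasserstein contraction rather than the pathwise coupling; nothing in the lemma's statement rules out the continuous-time version, and the moment/integrability issues you raise are real but routine (stochastic domination of $\|X_t\|_1$ by a Poisson birth process gives exponential moments). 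Note also that the paper itself points out that the prefactor $n$ in~\eqref{eqn.extra2} is an artefact of~\cite{lm04}'s bookkeeping and ``not the right answer''; your approach recovers the constant prefactor the paper says one should expect, so you are proving a strictly stronger statement, as you observe.

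One small point worth making explicit if you were to write this up: in the conditional (route 2) argument the bounded difference for a single randomizer is $2$ rather than $1$, since changing the quantile/queue choice of one arrival moves the state immediately by $\ell_1$-distance $2$, and the monotone coupling then keeps the distance $\le 2$ thereafter. This only changes constants but should be stated. Similarly, in route 1 the jump-size bound $|\Delta M_s|\le 1$ relies on the fact that the single event at time $s$ moves the state by at most one customer and the coupling propagates this forward; you say this, but it is worth noting that this is exactly the ``non-increasing $\ell_1$-distance'' property of the~\cite{lm04} coupling, not a generic path-coupling estimate.
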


Lemma 4.3 in~\cite{lm04} is proven by observing that the supermarket
process can be `simulated' by two independent Poisson processes, the
arrivals process (with rate $\lambda n$) and the (potential) departure
process (with rate $n$), together with
corresponding independent choices of queues ($d$ independent uniformly
random choices for each event in the arrivals process, and one
uniformly random choice in the departures process). One then conditions on
the number of events in the interval $[0,t]$, and then the state at time $t$ is
conditionally determined by a finite family of independent random
variables. In other words, the argument is, just like most of the
other arguments in~\cite{lm04}, based on studying the jump chain
$(\hat{X}_t)$, although this is not made explicit therein.

The non-increasing distance coupling property is used to show that a
Lipschitz function of the queue lengths vector must satisfy a bounded
differences condition, so that the discrete bounded differences
inequality can be applied to show concentration of measure for
Lipschitz functions in the conditional space. The proof is then
completed by deconditioning.

The rapid mixing result can be combined with the long-term
concentration of measure result to
prove concentration of measure in equilibrium for Lipschitz functions
of the queue-lengths vector. The following is Lemma 4.1 in~\cite{lm04}.

\begin{lemma} \label{lem.conca}
There is a constant $c > 0$ such that the following holds. Let $n \ge 2$
be an integer 
and consider the $n$-queue system. Let the queue-lengths vector
$Y$ have the equilibrium distribution. Let $f$ be a 1-Lipschitz
function on $S$. Then for each 
$u \geq 0$
\begin{equation}
\label{eqn.extra2a}
\P_{\pi} \left( |f(Y) - \E_{\pi}[f(Y)]| \geq u \right)
\leq n e^{-c u/ n^{\frac12}}.
\end{equation}
\end{lemma}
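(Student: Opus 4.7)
The plan is to combine the fixed-start concentration of Lemma~\ref{lem.conc} with the rapid mixing results (Theorems~\ref{thm.stat} and~\ref{thm.statW}). Running the chain from the all-zero state $\mathbf{0}$ for a suitable time $t$, the law of $X_t$ is close (in total variation and in Wasserstein distance) to $\pi$, so equilibrium concentration can be deduced from concentration of $f(X_t)$. The parameter $t$ will be tuned as a function of $u$ to yield the rate $\exp(-c u/n^{1/2})$.

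First, I would write, for a stationary $Y\sim\pi$,
\begin{equation*}
\P_{\pi}\bigpar{|f(Y)-\pi(f)|\ge u}
\le \P_{\delta_{\mathbf{0}}}\bigpar{|f(X_t)-\pi(f)|\ge u}
+ \dtv\bigpar{\cL(X_t),\pi},
\end{equation*}
since total variation distance controls the discrepancy of probabilities of any event. Letting $\mu_t=\E_{\delta_{\mathbf{0}}}[f(X_t)]$, the triangle inequality and the fact that $f$ is $1$-Lipschitz give $|\mu_t-\pi(f)|\le \dw(\cL(X_t),\pi)$, bounded by Theorem~\ref{thm.statW}. For $t\gg \log n$ this Wasserstein distance is $o(1)$ (the dominant terms being $ne^{-\eta t}$ and $2cn\,\P_\pi(M>\eta t)$, and the stationary maximum queue length is only $O(\log\log n)$ with very high probability), so I can arrange $|\mu_t-\pi(f)|\le u/2$ whenever $u$ is not too small. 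Consequently
\begin{equation*}
\P_{\delta_{\mathbf{0}}}\bigpar{|f(X_t)-\pi(f)|\ge u}
\le \P_{\delta_{\mathbf{0}}}\bigpar{|f(X_t)-\mu_t|\ge u/2},
\end{equation*}
and Lemma~\ref{lem.conc} bounds the right-hand side by $n\exp\bigpar{-c(u/2)^2/(nt+u/2)}$.

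The key balancing step is the choice of $t$. Taking $t\asymp u/n^{1/2}$ makes $nt\asymp u n^{1/2}$, so that
\begin{equation*}
\frac{(u/2)^2}{nt+u/2}\;\asymp\;\frac{u^2}{u n^{1/2}}\;=\;\frac{u}{n^{1/2}},
\end{equation*}
producing the bound $n\exp(-c'u/n^{1/2})$ from the Lemma~\ref{lem.conc} term. With the same choice of $t$, Theorem~\ref{thm.stat} yields $\dtv(\cL(X_t),\pi)\le n e^{-\eta t}+\text{smaller terms}\le n e^{-\eta u/n^{1/2}}+o(\cdot)$, which is absorbed into the final constant. For values of $u$ too small for this choice of $t$ to exceed the mixing time threshold (roughly $u\lesssim n^{1/2}\log n$), the target bound $ne^{-cu/n^{1/2}}$ exceeds $1$ and holds trivially; the statement thus needs to be verified only in the regime $u\gtrsim n^{1/2}\log n$, where the tuning above is feasible.

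The main obstacle is this balancing: the rate $u/n^{1/2}$, rather than the Gaussian rate $u^2/n$ one might hope for, is forced by the interplay between the $nt+u$ denominator in Lemma~\ref{lem.conc} (whose $t$-dependence reflects the accumulation of fluctuations over time) and the need for $t$ to be large enough for $\mu_t\approx\pi(f)$. Increasing $t$ improves the Wasserstein approximation but degrades the exponent in the concentration bound; decreasing $t$ does the opposite. The optimal trade-off is precisely $t\asymp u/n^{1/2}$, giving the sub-exponential rate stated in the lemma. Once this balance is identified, collecting the three contributions and absorbing constants yields the claim.
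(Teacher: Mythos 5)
Your proposal is correct and takes essentially the same route the paper indicates: combine the fixed-start concentration bound of Lemma~\ref{lem.conc} with the rapid-mixing estimates (Theorems~\ref{thm.stat} and~\ref{thm.statW}) to transfer concentration from $\cL(X_t,\mathbf{0})$ to the stationary measure $\pi$. The balancing $t\asymp u/n^{1/2}$ (valid once $u\gtrsim n^{1/2}\log n$, below which $ne^{-cu/n^{1/2}}\ge1$ is trivial) is the right way to extract the rate $u/n^{1/2}$ from the $nt+u$ denominator while keeping the total-variation and Wasserstein error terms subdominant.
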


Lemmas~\ref{lem.conc} and~\ref{lem.conca} prove strong concentration
of measure -- normal concentration for small deviations and
exponential concentration for larger deviations in the case of
starting from a fixed state, and 
exponential concentration in equilibrium. The factor $n$ in the bound on
the right-hand sides of both~(\ref{eqn.extra2})
and~(\ref{eqn.extra2a}) is a limitation of the technique and not the
right answer. It is natural to expect the truth to be a lot better --
that it can be replaced by a constant. In Section~\ref{sec:conc} we
develop concentration inequalities that achieve that. Although we work
with the discrete-time jump chain, it is easy to see that our results
apply also to the continuous time chain. One further advantage of our
inequalities is that they apply to other settings -- for instance where
rapid mixing is established by a coupling, but the
coupling does not have additional useful
properties such as the non-increasing Wasserstein distance.

Even so Lemmas~\ref{lem.conc} and~\ref{lem.conca} are quite powerful.
We now explore, briefly, some results concerning the queue lengths in
the supermarket model in equilibrium that can be obtained using
Lemma~\ref{lem.conca}. The following is Lemma 4.2 in~\cite{lm04}. (We
drop the subscript $\pi$ to lighten up the notation.)

\begin{lemma} \label{lem.concb}
Consider the $n$-queue system, and let the queue-lengths vector
$Y$ have the equilibrium distribution. For each non-negative integer
$k$, let $\ell (k,y)$ denote the
number of queues of length at least $k$ in state $y$. Also, for each non-negative
integer $k$,
let $\ell(k) = \E [\ell(k,Y)]$.  Then for any constant $c>0$,
$$\P (\sup_{k} |\ell(k,Y)- \ell(k)| \geq c n^{\frac12} \log^2 n) =
e^{-\Omega(\log^2 n)}.$$
Also, there exists a constant $c >0$ such that
$$\sup_k \P (|\ell(k,Y)- \ell(k)| \geq c n^{\frac12} \log n) =o(1).$$
Furthermore, for each integer $r \ge 2$
$$\sup_k |\E  [\ell(k,Y)^r] - \ell(k)^r| = O(n^{r-1}\log^2 n).$$
\end{lemma}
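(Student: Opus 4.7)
The plan is to apply Lemma~\ref{lem.conca} to the function $y \mapsto \ell(k,y)$ for each fixed $k$, and to control the supremum over $k$ by a union bound combined with a tail bound on the stationary maximum queue length $M$. First, $\ell(k,y) = \sum_{i=1}^n \mathbf{1}[y_i \ge k]$ is 1-Lipschitz on $S$: adjacent states differ at one coordinate by a single customer, which changes $\ell(k,\cdot)$ by at most one. Lemma~\ref{lem.conca} then yields, uniformly in $k$,
\begin{equation}\label{eq.planbd}
\P\bigl(|\ell(k,Y) - \ell(k)| \ge u\bigr) \le n\exp(-cu/n^{1/2}), \qquad u \ge 0,
\end{equation}
for some fixed $c>0$. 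Taking $u = c_1 n^{1/2} \log n$ with $cc_1 > 1$ makes the right-hand side $n^{1-cc_1} = o(1)$ uniformly in $k$, which is the second assertion.

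For the uniform bound, I would split the supremum at a threshold $K = C\log^2 n$ with $C$ a sufficiently large constant. For $k \le K$, a union bound applied to \eqref{eq.planbd} at $u = c n^{1/2}\log^2 n$ gives
$$\P\Bigl(\sup_{k \le K} |\ell(k,Y)-\ell(k)| \ge c n^{1/2}\log^2 n\Bigr) \le (K+1)n\, e^{-\Omega(\log^2 n)} = e^{-\Omega(\log^2 n)},$$
since the $O(\log^2 n)$ and $n$ prefactors are absorbed into the exponent. For $k > K$, I would use that in equilibrium $Y_1$ has an exponentially decaying tail (when $d=1$ it is geometric with parameter $1-\lambda$; when $d \ge 2$ it is stochastically smaller), whence both $\P(M > K) \le n\P(Y_1 > K) = e^{-\Omega(\log^2 n)}$ and, by exchangeability, $\ell(k) = n\P(Y_1 \ge k) = e^{-\Omega(\log^2 n)}$ for $k > K$. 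On $\{M \le K\}$ one has $\ell(k,Y) = 0$ for every $k > K$, so $|\ell(k,Y)-\ell(k)| \le \ell(k)$ is much smaller than $cn^{1/2}\log^2 n$, and the contribution of $k > K$ to the failure probability is at most $\P(M > K) = e^{-\Omega(\log^2 n)}$. Combining the two pieces proves the first assertion.

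For the moment bound, set $X = \ell(k,Y)$ and $m = \ell(k)$, so that $0 \le X, m \le n$ and $\E(X-m) = 0$. Expanding $X^r - m^r = \sum_{j=1}^r \binom{r}{j} m^{r-j}(X-m)^j$ and taking expectations kills the $j=1$ term, leaving $|\E X^r - m^r| \le \sum_{j=2}^r \binom{r}{j} n^{r-j} \E|X-m|^j$; a standard integration of \eqref{eq.planbd} yields $\E|X-m|^j = O((n^{1/2}\log n)^j)$ for each fixed $j$, so the $j=2$ term contributes $O(n^{r-1}\log^2 n)$ and dominates, uniformly in $k$. The main obstacle is the super-polynomial tail bound $\P(M > C\log^2 n) = e^{-\Omega(\log^2 n)}$ that supports the supremum estimate: Lemma~\ref{lem.conca} is by itself too weak for this, since its $n^{1/2}$ denominator in the exponent only yields $e^{-\Omega(\log n)}$ for deviations of order $n^{1/2}$; the bound on $M$ must instead be obtained from the exponential decay of individual queue-length distributions in equilibrium. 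Granted that input, the remainder is a routine assembly of \eqref{eq.planbd}, union bounds, and tail integration.
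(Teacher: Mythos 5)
The paper itself gives no proof of this lemma---it is quoted verbatim as Lemma 4.2 of \cite{lm04}---so the only meaningful comparison is with that source, and your route is the one used there: apply the equilibrium concentration bound (Lemma~\ref{lem.conca}) to the $1$-Lipschitz functions $\ell(k,\cdot)$, truncate the supremum over $k$ using a tail bound on the stationary maximum queue length, and obtain the moment estimate by binomial expansion plus tail integration. The one ingredient you flag as ``granted'' is in fact already available in the paper as displayed inequality~(\ref{eq.max-queue}) (equation (37) of \cite{lm04}): taking $r$ there to be a small constant multiple of $\log n$ gives $\P(M \ge C\log n)=e^{-\Omega(\log^2 n)}$ directly, which lets you truncate at $K=O(\log n)$ rather than $O(\log^2 n)$ and avoids having to invoke a stochastic comparison with the $d=1$ system. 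Modulo that shortcut your argument is correct and coincides with the intended one.
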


Lemma 5.1 in~\cite{lm04}, stated below, yields further precise information about the
equilibrium behaviour, over long time intervals.

\begin{lemma}
\label{lem.customers.2} 
Let $K > 0$ be an arbitrary constant and let $\tau =n^K$. 
Let $(Y_t)$ be in equilibrium and let
$c>0$ be a constant. Let $B_{\tau}$ be the event that for all
times $t$ with $0 \leq t \leq \tau$
$$\sup_i |\ell(i,Y_t)- n\lambda^{1+d+ \cdots+d^{i-1}}| \le c n^{1/2}\log^2 n.$$
Then $\P (\overline{B_{\tau}})\le e^{-\Omega (\log^2 n)}$.
\end{lemma}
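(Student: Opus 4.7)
The plan is to reduce the uniform-in-time bound on $[0,\tau]$ to a polynomial-sized union bound over a discrete time-mesh, together with a short-interval fluctuation control obtained from a Poisson tail estimate for the number of events. The target deviation scale is $n^{1/2}\log^2 n$ and the target failure probability is $e^{-\Omega(\log^2 n)}$, so any polynomial factor from union bounding will be absorbed into the $\log^2 n$ in the exponent.

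First I would choose a mesh spacing $\Delta = n^{-1/2}$ and set $t_j = j\Delta$ for $j = 0, 1, \ldots, N$, where $N = \lceil \tau/\Delta \rceil = O(n^{K+1/2})$ is polynomial in $n$. Since $(Y_t)$ is stationary, each $Y_{t_j}$ has law $\pi$, so applying the first assertion of Lemma~\ref{lem.concb} at each mesh point and union-bounding over $j$ shows that the event
$$\cA := \Bigl\{\sup_{j \le N} \sup_{k}\bigl|\ell(k, Y_{t_j}) - \ell(k)\bigr| \le c_1 n^{1/2} \log^2 n\Bigr\}$$
has probability $1 - e^{-\Omega(\log^2 n)}$ (the polynomial factor $N$ being absorbed into the exponent).

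Next, to extend the control to all $t \in [0,\tau]$, I would bound the variation of $Y$ within each subinterval $[t_j, t_{j+1}]$. The total number $E_j$ of events (arrivals and potential departures) of the continuous-time chain in such an interval is Poisson with mean $(1+\lambda)n\Delta = O(n^{1/2})$. A standard Chernoff bound gives
$$\P(E_j \ge n^{1/2}\log^2 n) \le \exp\bigl(-\Omega(n^{1/2}\log^2 n \cdot \log\log n)\bigr),$$
so a union bound over the $N$ intervals produces an event $\cB$ of probability $1 - e^{-\Omega(\log^2 n)}$ on which $\|Y_t - Y_{t_j}\|_1 \le E_j \le n^{1/2}\log^2 n$ for every $t \in [t_j, t_{j+1}]$ and every $j$ (using that each event changes at most one coordinate by $\pm 1$, and that the number of events in $[t_j,t]$ is monotone in $t$). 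Since $y \mapsto \ell(k,y)$ is $1$-Lipschitz in $\|\cdot\|_1$, on $\cA \cap \cB$ one obtains $|\ell(k, Y_t) - \ell(k)| \le (c_1+1)\, n^{1/2}\log^2 n$, uniformly in $k$ and in $t \in [0,\tau]$.

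Finally, it remains to match $\ell(k)$ to the fluid value $n\lambda^{1+d+\cdots+d^{k-1}}$. By exchangeability one has $\ell(k) = n\,\pi_k$ with $\pi_k := \P_\pi(Y(1)\ge k)$, and $\pi_k$ converges to $\lambda^{1+d+\cdots+d^{k-1}}$ at a rate far better than $n^{-1/2}$ by the quantitative propagation-of-chaos estimates of~\cite{lm04,lm04b}; this controls the deterministic gap and completes the proof after enlarging $c$. The principal obstacle is the balancing in the first two steps: $\Delta$ must be fine enough that the Poisson mean $(1+\lambda)n\Delta$ is below the target scale $n^{1/2}\log^2 n$ (so the within-interval fluctuation is admissible), and coarse enough that $N$ remains polynomial (so the union bound does not spoil the $e^{-\Omega(\log^2 n)}$ tail). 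The choice $\Delta = n^{-1/2}$ is the natural value that equalises these constraints.
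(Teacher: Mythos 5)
This lemma is quoted from \cite{lm04} (Lemma~5.1 there) rather than proved in the present paper, and your argument --- a polynomially sized time-mesh, the equilibrium concentration of Lemma~\ref{lem.concb} at each mesh point, a Poisson tail bound on the number of events between mesh points combined with the $\ell_1$-Lipschitz property of $\ell(k,\cdot)$, and the deterministic estimate $\sup_k|\ell(k)-n\lambda^{1+d+\cdots+d^{k-1}}|=O(\log^2 n)$ from \cite{lm04} to pass from $\ell(k)$ to the fluid value --- is exactly the route taken in that reference. One cosmetic point: with $\Delta=n^{-1/2}$ the Poisson mean $(1+\lambda)n\Delta$ is itself of order $n^{1/2}$, which is not clearly below $c\,n^{1/2}\log^2 n$ when $c$ is small, so the claimed ``balancing'' is illusory; in fact the only constraint is that $\Delta$ be an inverse polynomial, and taking $\Delta=n^{-1}$ (say) makes the intra-interval contribution negligible at any prescribed scale while $N$ stays polynomial.
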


In~\cite{lm04}, Lemma 5.1 is used to prove two-point concentration for
the stationary maximum queue length and its concentration on only a
constant number of values over long time intervals. This is Theorem
1.3 in~\cite{lm04}:

\begin{theorem} \label{thm.dis2}
Let $0<\lambda<1$ and let $d \geq 2$ be an integer. Then there
exists an integer-valued function $m_d=m_d(n) = {\log \log n}/{\log
d} +O(1)$ such that the following holds. For each positive integer
$n$, suppose that the queue-lengths vector ${Y}_0^{(n)}$ is in the
stationary distribution (and thus so is the maximum queue length
$M^{(n)}_t$). Then for each time $t \geq 0$, $M_t^{(n)}$ is
$m_d(n)$ or $m_d(n)-1$ with probability tending to 1 as $n \to \infty$; and further, for any constant $K>0$
there exists $c=c(K)$ such that, with probability tending to 1 as $n
\to \infty$,
\begin{equation} \label{eqn.dis2a}
\max_{0 \le t \le n^K} |M^{(n)}_t-\log \log n/\log d| \le c.
\end{equation}
\end{theorem}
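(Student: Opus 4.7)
The plan is to combine the uniform concentration of tail counts from \refL{lem.customers.2} with the fact that $nv(i) \deq n\lambda^{1+d+\cdots+d^{i-1}}$ decays doubly exponentially once $i$ exceeds $\log\log n/\log d$. Writing $v(i) = \lambda^{(d^i-1)/(d-1)}$ one has $v(i+1) = v(i)\lambda^{d^i}$, so the sequence $nv(i)$ drops from $n^{\Theta(1)}$ to $n^{-\Theta(1)}$ within $O(1)$ consecutive values of $i$. I would set $m_d(n)$ to be the largest integer with $nv(m_d(n)) \ge 1$; a direct computation gives $m_d(n) = \log\log n/\log d + O(1)$, together with $nv(m_d-1) \ge n^{(d-1)/d + o(1)}$ and $nv(m_d+c) \le n^{1-d^c + o(1)}$ for each $c \ge 1$.

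\textbf{Lower bound (handles the bottom halves of both claims).} I would apply \refL{lem.customers.2} with $\tau = n^K$. On the event $B_\tau$, which holds with probability $1 - e^{-\Omega(\log^2 n)}$,
\[ \ell(m_d(n)-1,\, Y_t)\ \ge\ nv(m_d(n)-1) - cn^{1/2}\log^2 n \ >\ 0 \]
simultaneously for every $t \in [0, n^K]$, because $nv(m_d-1) \gg n^{1/2}\log^2 n$. Hence $M_t^{(n)} \ge m_d(n) - 1$ throughout $[0,n^K]$.

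\textbf{Upper bound.} I would handle the two parts separately. For (\ref{eqn.dis2a}), given $K$, fix the integer $c = c(K) = \lceil \log(K+3)/\log d \rceil$ so that $nv(m_d + c) \le n^{-K-2}$. The continuous-time chain fires $O(n^{K+1}\log n)$ jumps in $[0, n^K]$ with probability $1-o(1)$, and at each jump time $t$ Markov's inequality gives $\P(M_t^{(n)} \ge m_d+c) \le \E[\ell(m_d+c, Y_t)]$; a sharp first-moment estimate $\E[\ell(k, Y_t)] = O(nv(k))$ for $k > m_d$ (see below) then yields a per-jump bound $O(n^{-K-2})$, and a union bound closes (\ref{eqn.dis2a}) with $c$ depending on $K$. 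For the two-value concentration at fixed $t$, the same first-moment estimate at $k = m_d+1$ gives $\P(M_t \ge m_d + 1) = O(n^{-(d-1)}) = o(1)$ (recall $d \ge 2$), and combined with the lower bound places $M_t$ in $\{m_d-1, m_d\}$ with probability $1 - o(1)$.

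\textbf{Main obstacle.} The sharp first-moment estimate $\E[\ell(k, Y_t)] = O(nv(k))$ for $k$ just above $m_d(n)$ does not follow from \refL{lem.customers.2} alone, which only yields $\E[\ell(k, Y_t)] \le nv(k) + O(n^{1/2}\log^2 n)$; the additive $n^{1/2}\log^2 n$ error swamps the main term precisely in the regime $nv(k) \ll 1$ where Markov's inequality is needed to conclude. Closing this gap calls for a separate argument on the equilibrium distribution --- for example propagating a bound through the stationary recursion relating $\E[\ell(k, Y)]$ at successive $k$, or stochastically dominating the supermarket chain by an iid system with the fixed-point densities $v(i)$ --- of the kind carried out in~\cite{lm04}.
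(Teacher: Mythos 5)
Your overall strategy — lower-bounding $M_t^{(n)}$ via \refL{lem.customers.2} and upper-bounding it via a tail estimate for $\ell(k,\cdot)$ near the threshold — matches the structure the paper indicates for the argument of \cite{lm04}, but there are two real gaps, one of which you do not notice.

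The first gap is in the lower bound for $d=2$. Your choice $m_d(n) = \max\{i : nv(i) \ge 1\}$ is not calibrated to the concentration band of \refL{lem.customers.2}. Your own estimate gives $nv(m_d-1) \ge n^{(d-1)/d}$, which for $d=2$ is only $n^{1/2}$; moreover, whenever $nv(m_d(n))$ is close to $1$ (which occurs for a positive density of $n$), one computes $nv(m_d-1) = \Theta(n^{1/2})$, so the assertion $nv(m_d-1) \gg n^{1/2}\log^2 n$ is simply false for $d=2$. Then $\ell(m_d-1,Y_t) \ge nv(m_d-1) - cn^{1/2}\log^2 n$ does not imply $\ell(m_d-1,Y_t)>0$, and your argument gives at best three-point concentration when $d=2$. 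This is precisely why the paper defines $m_d(n)$ through $i_d(n)$, the least $i$ with $v(i) < n^{-1/2}\log^2 n$ (so that $nv(i_d-1) \ge n^{1/2}\log^2 n$ with room to beat the constant $c$ in \refL{lem.customers.2}), with an additional $+1$ shift in the $d=2$ case; the threshold $n^{-1/2}\log^2 n$ is not arbitrary but matched to the width of the concentration inequality.

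The second gap you identify yourself, but the route you gesture at (a stationary recursion or stochastic domination to get $\E[\ell(k,Y_t)]=O(nv(k))$) is not the one the paper takes. Instead the paper invokes the tail bound~\eqref{eq.max-queue}, $\P_\pi(M \ge m_d(n)+r) \le e^{-cr\log n}$, which is equation~(37) of \cite{lm04}. This replaces the first-moment estimate entirely: it gives a super-polynomially small per-instant probability of exceeding $m_d$, and a union bound over the $O(n^{K+1}\log n)$ jumps in $[0,n^K]$ closes~\eqref{eqn.dis2a} with $r$ a suitable function of $K$, while the case $r=1$ gives the fixed-$t$ two-point statement. Your proposal never invokes~\eqref{eq.max-queue}, and this is the missing ingredient. (Two minor slips in your arithmetic, which do not affect the structure: one has $nv(m_d+c) \le n^{1-d^{c-1}+o(1)}$ rather than $n^{1-d^c+o(1)}$, and accordingly $c(K)$ should be $\lceil 1+\log(K+3)/\log d\rceil$.)
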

The functions $m_2(n)$, $m_3(n)$, ...  may be defined as follows.
For $d=2,3,\ldots$ let $i_d(n)$ be the least integer $i$ such that
$\lambda^{\frac{d^i -1}{d-1}} < n^{-\frac12} \log^2 n$. Then we let
$m_2(n)=i_2(n)+1$, and for $d \geq 3$ let $m_d(n)=i_d(n)$. (As we have
seen, with high probability the proportion
of queues of length at least $i$ is close to $\lambda^{\frac{d^i
-1}{d-1}}$.)

Also, equation~(37) in~\cite{lm04} shows that, for $r=O(\log n)$,
\begin{equation}
\label{eq.max-queue}
\P (M \ge m_d (n) + r) \le e^{-c r \log n},
\end{equation}
for a constant $c>0$.

\medskip

In~\cite{lm04b}, strong concentration of measure results from~\cite{lm04} are used
to show that in equilibrium the distribution of a
typical queue length converges to an explicit limiting distribution
and provide explicit convergence rates. 
Let   $Y^{(n)}(1)$ denote the equilibrium length of of queue 1. (Note
that the equilibrium distribution is exchangeable.) The following is
Theorem 1.1 in~\cite{lm04b}.
   Let ${\cL}_{\lambda,d}$ denote the law of a random variable $Y$
such that $\P (Y \ge k) = v(k)$, where $v(k)=
 \lambda^{(d^k-1)/(d-1)}$ for each $k=0,1,\ldots$.
Note that $\P(Y^{(n)}(1) \geq 1) = \lambda = v(1)$.

   \begin{theorem} \label{thm.distql}
   For each positive integer $n$ let $Y^{(n)}$ be a queue-lengths
   $n$-vector in equilibrium, and consider the length $Y^{(n)}(1)$ of
   queue~1. Then 
$$\dtv ({\cL}(Y^{(n)}(1)), {\cL}_{\lambda,d})$$
   is of order $n^{-1}$ up to logarithmic factors.
   \end{theorem}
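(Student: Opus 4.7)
The plan is to reduce the theorem to pointwise control of the tail probabilities $p_n(k) \deq \P(Y^{(n)}(1) \ge k)$; by exchangeability, $p_n(k) = \E[\ell(k,Y^{(n)})]/n$. For any two non-negative integer-valued random variables, a telescoping argument gives
$$\dtv\bigpar{\cL(Y^{(n)}(1)),\,\cL_{\lambda,d}} \le \sum_{k \ge 1} |p_n(k) - v(k)|,$$
so it suffices to prove $|p_n(k) - v(k)| = O(n^{-1} \log^c n)$ for each $k$ up to some cutoff $K \deq m_d(n) + O(1)$, and that the tail contribution $\sum_{k>K}(p_n(k)+v(k))$ is $o(n^{-1})$. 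The latter follows from the super-polynomial tail bound~(\ref{eq.max-queue}), which combined with exchangeability gives $p_n(k) \le \P(M\ge k)$, together with the doubly-exponential decay of $v(k)$.

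Write $L(j) \deq \ell(j, Y^{(n)})$. The generator of the supermarket chain applied to $L(k,\cdot)$ has vanishing expectation in equilibrium, yielding the exact balance
$$\lambda \, \E\bigsqpar{(L(k-1)/n)^d - (L(k)/n)^d} = p_n(k) - p_n(k+1).$$
The moment estimate $|\E[L(j)^d] - \E[L(j)]^d| = O(n^{d-1} \log^2 n)$ from Lemma~\ref{lem.concb} converts this into the perturbed balance
$$\lambda\bigpar{p_n(k-1)^d - p_n(k)^d} = p_n(k) - p_n(k+1) + O(n^{-1} \log^2 n).$$
Summing from $k = j$ to $k = K$, the left side telescopes to $\lambda(p_n(j-1)^d - p_n(K)^d)$, the right side to $p_n(j) - p_n(K+1)$, plus an aggregate error $O(K \cdot n^{-1} \log^2 n) = O(n^{-1} \log^3 n)$ since $K = O(\log\log n)$. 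Using the super-polynomial smallness of $p_n(K)$ to discard the endpoint contributions, we obtain the approximate recurrence
$$p_n(j) = \lambda\, p_n(j-1)^d + O(n^{-1}\log^3 n),\qquad 1 \le j \le K.$$

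Since $v(j) = \lambda\, v(j-1)^d$ exactly and $x\mapsto x^d$ is $d$-Lipschitz on $[0,1]$, the errors $\eps_j \deq p_n(j) - v(j)$ satisfy $\eps_0 = 0$ and $|\eps_j| \le \lambda d\,|\eps_{j-1}| + O(n^{-1} \log^3 n)$. Iterating gives $|\eps_j| = O\bigpar{(\lambda d)^j\, n^{-1} \log^3 n}$; since $j \le K = O(\log\log n)$, we have $(\lambda d)^j = (\log n)^{O(1)}$, so $|\eps_j| = O(n^{-1} \log^{c'} n)$ uniformly in this range. Summing over $j \le K$ gives $\sum_j |\eps_j| = O(n^{-1}\log^{c''} n)$, which combined with the negligible tail contribution from $j > K$ yields the claimed bound $\dtv = O(n^{-1})$ up to logarithmic factors. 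The main obstacle is that when $\lambda d \ge 1$ the recursion for $|\eps_j|$ is geometrically expanding; this is tamed by restricting to the range $j = O(\log\log n)$, which converts the geometric factor into a polylogarithmic one, but it is important to verify that the truncation at $K$ is clean enough that the dropped tail terms contribute at most $o(n^{-1})$ to the total variation distance.
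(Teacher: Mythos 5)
Your plan reconstructs precisely the route the paper attributes to \cite{lm04b} for this theorem: exchangeability converts the single-queue marginal into the rescaled expected occupation numbers $p_n(k)=\E[\ell(k,Y^{(n)})]/n$; the polynomial form of the generator gives the equilibrium balance $\lambda\,\E\bigl[(L(k-1)/n)^d-(L(k)/n)^d\bigr]=p_n(k)-p_n(k+1)$; the moment estimates of Lemma~\ref{lem.concb}, which come from concentration of measure, close the system into the perturbed recursion $\lambda(p_n(k-1)^d-p_n(k)^d)=p_n(k)-p_n(k+1)+O(n^{-1}\log^2 n)$; and the truncation at $K=m_d(n)+O(1)$ is controlled by the super-polynomial tail bound~(\ref{eq.max-queue}) together with the doubly-exponential decay of $v$. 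The two subtleties you flag are both handled correctly: since $K=O(\log\log n)$ the geometric factor $(\lambda d)^K$ is only $(\log n)^{O(1)}$ (because $\log(\lambda d)/\log d<1$), and the endpoint terms $p_n(K)^d$, $p_n(K+1)$, $v(K+1)$ are all $o(n^{-1})$ for a large enough additive constant in $K$. So the upper-bound estimate $\dtv=O(n^{-1}\,\mathrm{polylog}\,n)$ is sound, and the method is the same one the paper describes.

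The one substantive omission is the lower bound. The theorem says the total variation distance \emph{is of order} $n^{-1}$ up to logarithmic factors, and the paper states explicitly that \cite{lm04b} proves both $o(n^{-1}\log^3 n)$ and $\Omega(n^{-1})$. Your argument only yields the upper bound; you even conclude with ``the claimed bound $\dtv = O(n^{-1})$,'' reading the statement one-sidedly. To close the gap one must show the mean-field tail $v(k)$ is genuinely off by $\Theta(n^{-1})$ for at least one $k$, which requires identifying a nonvanishing $\Theta(n^{-1})$ term in the error $\E[(L(k)/n)^d]-p_n(k)^d$ (essentially a variance contribution) rather than merely bounding it by $O(n^{-1}\log^2 n)$. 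This is a separate, more delicate computation than anything in your outline, and should be at least acknowledged if the theorem is to be proved in the two-sided form stated.
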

    In fact, it is proven in~\cite{lm04b} that the above total variation distance is
   $o(n^{-1} \log^3n)$ and is $\Omega(n^{-1})$.  Also, the following
holds (Corollary 1.2 in~\cite{lm04b}).
   \begin{corollary} \label{cor.distql}
   For each positive integer $k$, the difference between the $k$th moment
   $\E[Y^{(n)}(1)^k]$ and the $k$th moment of ${\cL}_{\lambda,d}$
   is of order $n^{-1}$ up to logarithmic factors.
   \end{corollary}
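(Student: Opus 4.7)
The plan is to reduce the moment comparison to a tail comparison via the layer-cake identity
\[
  \E[Y^{(n)}(1)^k] - \E[Y^k]
    = \sum_{j=1}^{\infty}\bigl(j^k-(j-1)^k\bigr)
      \Bigl(\P\bigl(Y^{(n)}(1)\ge j\bigr)-\P(Y\ge j)\Bigr),
\]
where $Y\sim \cL_{\lambda,d}$, and then bound the absolute value of the sum by splitting it at a threshold $K=K(n)$ chosen just above the typical value of the maximum queue. Since $j^k-(j-1)^k\le k\,j^{k-1}$, the two sub-sums are controlled by (a) the total variation estimate of \refT{thm.distql} on the range $j\le K$, and (b) the super-polynomial tail bound \eqref{eq.max-queue} on the range $j>K$.

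First I would take $K = m_d(n)+C\log n$ for a large enough constant $C$, so that $K=O(\log n)$ since $m_d(n)=\log\log n/\log d+O(1)$. For the head of the sum, each summand satisfies
\[
  \bigl|\P(Y^{(n)}(1)\ge j)-\P(Y\ge j)\bigr|
    \le \dtv\bigl(\cL(Y^{(n)}(1)),\cL_{\lambda,d}\bigr)
    = O\!\bigl(n^{-1}\log^3 n\bigr),
\]
by \refT{thm.distql}, and the weights contribute a factor at most $\sum_{j\le K} k\,j^{k-1}\le K^k$. Hence the contribution of the head is $O\!\bigl((\log n)^k\cdot n^{-1}\log^3 n\bigr)=O(n^{-1})$ up to logarithmic factors, as required.

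For the tail I would bound the two terms separately. On the one hand, $\P(Y\ge j)=\lambda^{(d^j-1)/(d-1)}$ decays super-exponentially, so $\sum_{j>K}kj^{k-1}\P(Y\ge j)$ is negligible. On the other hand, by exchangeability $\P(Y^{(n)}(1)\ge j)\le \P(M\ge j)$, and for $j$ in the range $K<j\le m_d(n)+c_0\log n$ (for some constant $c_0$ making \eqref{eq.max-queue} applicable) we get an exponentially-in-$\log n$ small contribution
\[
  \sum_{j>K} k j^{k-1}\,e^{-c(j-m_d(n))\log n}
    = O\!\bigl(K^{k-1}\,n^{-cC}\bigr),
\]
which is $o(n^{-1})$ once $C$ is chosen large enough relative to $c$. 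Finally, for $j$ past the regime where \eqref{eq.max-queue} is stated, I would use the crude a priori bound $\P(M\ge j)\le \E[M^k]/j^k$, noting that $M$ is dominated by the total number of customers, which has polynomially-bounded moments in the stationary regime; this makes the far tail summable and dwarfed by $n^{-1}$. Combining the head and tail estimates yields $|\E[Y^{(n)}(1)^k]-\E[Y^k]|=O(n^{-1})$ up to logarithmic factors.

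The step I expect to be the main obstacle is the far-tail control: \eqref{eq.max-queue} is stated only for $r=O(\log n)$, so I must separately argue (using crude integrability of $M$ in equilibrium via Theorem~\ref{thm.statW} and exchangeability) that the contribution from $j\gtrsim m_d(n)+\log n$ is already much smaller than $n^{-1}$, rather than trying to push \eqref{eq.max-queue} beyond its stated range. Everything else is bookkeeping: choosing $C$ large, collecting logarithmic factors, and noting that the bounds are uniform in $k$ once $k$ is fixed.
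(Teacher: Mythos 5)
Your overall strategy — the layer-cake identity, splitting at a threshold $K = m_d(n)+C\log n$, controlling the head with $\dtv$ from Theorem~\ref{thm.distql}, and controlling the tail via bounds on $\P(M\ge j)$ — is the natural one, and is essentially how one would prove this (the paper itself only cites this as Corollary~1.2 of [lm04b] without reproducing a proof). The head estimate and the telescoping bound $\sum_{j\le K}\bigl(j^k-(j-1)^k\bigr)=K^k=O((\log n)^k)$ are fine, and the $\cL_{\lambda,d}$ side of the tail is indeed negligible. Your use of \eqref{eq.max-queue} in the middle range is also sound (in fact it gives $e^{-cC(\log n)^2}$, which is much smaller than the $n^{-cC}$ you wrote, so no tension arises in choosing $C$).

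The genuine gap is in the far-tail step. You propose $\P(M\ge j)\le\E[M^k]/j^k$ and claim the far tail is then dwarfed by $n^{-1}$, but this cannot work. With that exponent the weighted sum $\sum_{j>j_0}kj^{k-1}\,\E[M^k]/j^k$ is $\E[M^k]\sum_{j>j_0}k/j$, which diverges. Using a higher moment $\E[M^p]/j^p$ with $p>k$ makes the sum converge, but the resulting bound is of order $\E[M^p]/j_0^{\,p-k}$ with $j_0=O(\log n)$, i.e.\ at best $(\text{poly}\log n)/(\log n)^{p-k}$, which is not remotely $o(n^{-1})$ no matter how large $p$ is. Markov's inequality simply decays too slowly in $j$ relative to a threshold that is only logarithmic in $n$. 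The correct far-tail ingredient is an exponential tail for $M$, valid for \emph{all} $j$, not a polynomial one: by the monotone coupling in $d$ (Lemma~2.4 of [lm04], cited in Section~\ref{sec:balls-and-bins}), each queue in the $d\ge2$ model is stochastically dominated by the corresponding queue in the $d=1$ model, which is an $M/M/1$ queue, so $\P(Y^{(n)}(1)\ge j)\le\lambda^j$ and $\P(M\ge j)\le n\lambda^j$. With this, $\sum_{j>K}kj^{k-1}\P(M\ge j)\lesssim nK^{k-1}\lambda^{K}\lesssim n(\log n)^{k-1}n^{-C|\log\lambda|}$, which is $o(n^{-1})$ once $C>2/|\log\lambda|$. (This also renders the middle-range use of \eqref{eq.max-queue} unnecessary.) Your appeal to Theorem~\ref{thm.statW} for integrability of $M$ in equilibrium is also not quite apt — that theorem bounds a Wasserstein distance and does not by itself furnish polynomial moment bounds for $M$; the monotone coupling is both the cleaner and the sufficient tool.
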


  The above results concern the distribution of a single queue
   length. One may also consider collections of queues and
   chaoticity. The terms `chaoticity' and `propagation of chaos'
come from statistical
   physics~\cite{kac}, and the original motivation was the evolution of
   particles in physical systems. The subject has since then received
   considerable attention, especially following the ground-breaking work of Sznitman~\cite{s91}.

The result below (Theorem 1.4 in~\cite{lm04b}) establishes chaoticity for the
   supermarket model in equilibrium.
   We see that for fixed $r$ the total variation distance between
   the joint law of $r$ queue lengths and the product law is at most
   $O(n^{-1})$, up to logarithmic factors.  More precisely and more generally
   we have:
   \begin{theorem} \label{thm.chaos1}
   For each positive integer $n$, let $Y^{(n)}$ be a queue-lengths
   $n$-vector in equilibrium. Then, uniformly over all positive
   integers $r \leq n$, the total variation distance between the
   joint law of $Y^{(n)}(1), \ldots, Y^{(n)}(r)$ and the product law
   ${\cL}(Y^{(n)}(1))^{\otimes r}$
   is at most
   $O(n^{-1} \log^2 n (2 \log \log n)^r)$;
   and the total variation distance between the
   joint law of $Y^{(n)}(1), \ldots, Y^{(n)}(r)$ and
   the limiting product law ${\cL}_{\lambda,d}^{\otimes r}$
   is at most
   $O(n^{-1} \log^2 n (2 \log \log n)^{r+1})$.
   \end{theorem}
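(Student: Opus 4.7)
The plan exploits the exchangeability of the stationary distribution together with the strong concentration of the empirical queue-length distribution. Write $n_k(Y) \= \ell(k,Y)-\ell(k+1,Y)$ for the number of queues of length exactly $k$. By exchangeability, conditional on the empirical vector $L=(n_k(Y))_{k\ge 0}$, the law of $(Y(1),\dots,Y(r))$ is that of $r$ samples drawn without replacement from the multiset of queue values, so the conditional probability of observing $(k_1,\dots,k_r)$ is a hypergeometric expression in $L$. The first step is to truncate to the event $\{\max_{i\le r} Y^{(n)}(i)\le m_d(n)+K\}$ for a large constant $K$: by the tail estimate~(\ref{eq.max-queue}) and a union bound, the complement contributes $O(r n^{-cK})$ to both measures, which is $o(n^{-1}\log^2 n)$ for $K$ large, and the corresponding tail of $\cL_{\lambda,d}$ is immediate from its explicit form. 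After truncation the support contains at most $(m_d(n)+K+1)^r = O((2\log\log n)^r)$ tuples, which accounts for the $(2\log\log n)^r$ factor in the bound.

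For each truncated tuple $(k_1,\dots,k_r)$ I would split the per-tuple error into two parts. A standard telescoping comparison of sampling without versus with replacement gives the pointwise bound
$$\left|\P\bigl(Y(1)=k_1,\dots,Y(r)=k_r \mid L\bigr) - \prod_{i=1}^r\frac{n_{k_i}(Y)}{n}\right| \le \frac{r^2}{n}.$$
The remaining, more delicate, quantity is the covariance-type discrepancy $\E[\prod_i n_{k_i}(Y)/n]-\prod_i \E[n_{k_i}(Y)/n]$, which I would bound by $O(n^{-1}\log^2 n)$ by writing each $n_{k_i}(Y)$ as a difference of two $\ell(\cdot,Y)$'s, expanding the product into a sum of at most $2^r$ terms of the form $\E[\prod_{i\in I}\ell(k_i',Y)/n]$, and invoking the joint moment estimate in Lemma~\ref{lem.concb}(iii). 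Since $\E[n_{k_i}(Y)/n]=\P(Y(1)=k_i)$ by exchangeability, summing the per-tuple error of order $O(n^{-1}\log^2 n)$ over the $(2\log\log n)^r$ truncated tuples yields the first bound.

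The second bound follows the same scheme, replacing $\P(Y(1)=k_i)$ by $v(k_i)-v(k_i+1)$ throughout. The additional ingredient is the one-coordinate estimate $|\E[n_{k_i}(Y)/n]-(v(k_i)-v(k_i+1))|=O(n^{-1}\log^2 n)$ (essentially the one-dimensional case of Theorem~\ref{thm.distql}), which is propagated along the $r$ coordinates by telescoping in the product and then summed over the truncated tuples; this is what produces the extra factor $2\log\log n$ in the exponent. The main technical obstacle is obtaining the covariance bound $O(n^{-1}\log^2 n)$ uniformly over the truncated tuples: Lemma~\ref{lem.concb} directly yields this rate for the second moment of a single $\ell(k,Y)$, and extending it to arbitrary $r$-fold products of the $n_{k_i}(Y)/n$ without picking up combinatorial factors that would spoil the $(2\log\log n)^r$ dependence requires one to exploit the third assertion of Lemma~\ref{lem.concb} carefully, using that only $r=O(\log n)$ gives non-trivial information.
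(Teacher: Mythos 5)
This theorem is \emph{not proved in the present paper}: it is quoted verbatim as Theorem~1.4 of~\cite{lm04b}, and the only methodological information the paper gives is the remark that the proofs of Theorems~1.1 and~1.4 in~\cite{lm04b} rest on ``concentration of measure for Lipschitz functions of the state vector, the polynomial form of the generator of the Markov process, and, in the case of Theorem~1.1, also the exchangeability of the stationary distribution'', together with the comment that Theorem~\ref{thm.chaos1} is a quantitative version of Sznitman~\cite{s91}. Your proof is therefore a genuinely different route: it discards the generator entirely and argues through exchangeability, a hypergeometric/without-replacement reduction, and concentration of the empirical queue-length profile. That is a legitimate alternative design and its skeleton (truncate to $\{\max_i Y(i)\le m_d(n)+K\}$, hypergeometric vs.\ multinomial cost $O(r^2/n)$, then compare $\E\bigl[\prod_i n_{k_i}(Y)/n\bigr]$ with $\prod_i\E[n_{k_i}(Y)]/n$) is sensible, and the second half (telescoping $\P(Y(1)=k)$ against $v(k)-v(k+1)$, which produces the extra $2\log\log n$) is essentially correct.

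However, the central step as you have written it has real gaps. First, the claimed per-tuple bound
$\bigl|\E[\prod_i n_{k_i}(Y)/n]-\prod_i\E[n_{k_i}(Y)/n]\bigr|=O(n^{-1}\log^2 n)$
is not what Lemma~\ref{lem.concb} gives you. Writing $\nu(k)=n_k(Y)/n-\E[n_k(Y)]/n$ and expanding
$\prod_i(\mu(k_i)+\nu(k_i))$, the linear terms vanish in expectation, and Lemma~\ref{lem.concb}~(iii) with $r=2$ plus Cauchy--Schwarz (or H\"older for higher-order terms) gives a bound of order $r^2\log^2 n/n$ for the second-order terms, with an explicit $r$-dependence that you have suppressed. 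Naively multiplying an $O(r^2\log^2 n/n)$ per-tuple bound by the $(2\log\log n)^r$ truncated tuples yields $O(r^2 n^{-1}\log^2 n(2\log\log n)^r)$, which is \emph{not} uniform in $r$ in the sense asserted by the theorem. The correct accounting is not ``per-tuple error times number of tuples'': when you sum the signed per-tuple quantities over tuples, the coordinates not in the deviation index set $T$ contribute $\sum_{k}\mu(k)=1$ rather than a factor of $2\log\log n$, so the total is in fact $O(r^2(\log\log n)^2\log^2 n/n)$, which \emph{is} dominated by $O(n^{-1}\log^2 n(2\log\log n)^r)$ for $r\ge 2$. Your write-up never makes this replacement, and as stated the step fails.

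Second, expanding $n_{k_i}(Y)=\ell(k_i,Y)-\ell(k_i+1,Y)$ and distributing over the $r$-fold product introduces $2^r$ terms, a loss you yourself flag but do not resolve; this multiplies your final bound by $2^r$ and genuinely spoils the $(2\log\log n)^r$ dependence. There is no need for this expansion: $n_k(Y)$ is itself a $2$-Lipschitz function of the state, so the same concentration estimates apply to it directly, and one should simply work with the $\nu(k)$'s. Third, Lemma~\ref{lem.concb}~(iii) is a \emph{single-variable} moment bound $\sup_k|\E[\ell(k,Y)^r]-\ell(k)^r|$; it is not a ``joint moment estimate'' for $\E[\prod_{i\in I}\ell(k_i',Y)]$ as you cite it. A joint analogue can be proved from the same Lipschitz concentration, but you would have to state and prove it, and in doing so track the $r$-dependence you are currently dropping. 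Fourth, the ``pointwise bound'' $\bigl|\P(Y(1)=k_1,\dots,Y(r)=k_r\mid L)-\prod_i n_{k_i}(Y)/n\bigr|\le r^2/n$ is not a pointwise statement (it fails when some $n_{k_i}$ is small); what is true is that the total variation distance between sampling with and without replacement is at most $r(r-1)/(2n)$, and that quantity should enter the estimate once, not be summed over all $(2\log\log n)^r$ tuples. With these repairs the argument is plausibly salvageable, but as written the covariance-discrepancy step does not yield the stated uniform-in-$r$ bound.
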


Analogous time-dependent results (away from equilibrium) are also
given in~\cite{lm04b} -- proven using Lemma~\ref{lem.conc} above
(Lemma 4.3 in~\cite{lm04}) but we omit them here for the sake of
brevity. Let us mention that the arguments used in~\cite{lm04b} to
prove Theorems 1.1 and 1.4 (Theorems~\ref{thm.distql}
and~\ref{thm.chaos1} above) are quite generic and would apply in many
other settings. The main property needed is concentration
of measure for Lipschitz functions of the state vector, the polynomial
form of the generator of the Markov process, and, in the case of
Theorem 1.1, also the exchangeability of the stationary distribution. The
chaoticity result Theorem~\ref{thm.chaos1} above is a quantitative
version of some of the results in~\cite{s91}.

\medskip

To conclude this section, we mention that analogues of results
in~\cite{lm04,lm04b} are proved in~\cite{lm03} for a related
balls-and-bins model, where, instead of queueing up to receive service
on a first-come first-served basis, customers (balls) have independent
exponentially distributed `lifetimes' and each departs its queue
(bin) as soon as its lifetime has expired.

\smallskip

Current work in progress~\cite{f} includes extensions of the results
in~\cite{lm04,lm04b} to the supermarket model where the number of
choices $d=d(n)$ and the arrival rate $\lambda= \lambda(n)$ are
$n$-dependent, including the interesting case where $d \to \infty$ and
$\lambda \to 1$ with various functional dependencies between $\lambda$
and $d$.

\medskip

\section{Coupling and bounded differences method generalised}

\label{sec:conc}

This section contains our main results and applications. We use the
notation introduced in Section~\ref{sec:not}.

Let us state our first theorem, which gives concentration of measure
for Lipschitz functions of a discrete-time Markov chain on state space
$S$ and with transition matrix $P$ at time $t$, under
assumptions on the Wasserstein distance between its $i$ step
transition measures for $i \le t$.
\begin{theorem}
\label{thm.conca-general}
Let $P$ be the transition matrix of a discrete-time Markov chain with
discrete state space $S$.
\begin{thmenumerate}
\item
Let $(\alpha_i: i \in \N)$ be a sequence of positive constants 
such that, for all $i$,
\begin{equation}
\label{cond-gen}
\sup_{x,y \in S: d(x,y) =1}\dw (\delta_x P^i,\delta_y P^i) \le \alpha_i.
\end{equation}
Let $f$ be a 1-Lipschitz function. Then for all $u > 0$, $x_0 \in S$, 
and $t > 0$,
\begin{equation}
\label{ineq.conca-general}
\P_{\delta_{x_0}} (|f(X_t)-\E_{\delta_{x_0}} [f(X_t)] |\ge u ) \le
2e^{-u^2/2(\sum_{i=1}^t \alpha_i^2)}.
\end{equation}
\item
More generally, let $S_0$ be a non-empty subset of $S$, and let
$(\alpha_i: i \in \N)$ be a sequence of positive constants 
such that, for all $i$,
\begin{equation}
\label{cond-gen-1}
\sup_{x,y \in S_0: d(x,y) =1}\dw (\delta_x P^i,\delta_y P^i) \le \alpha_i.
\end{equation} 
Let
$$S_0^0= \{ x \in S_0: y \in S_0 \mbox{ whenever } d(x,y)=1\}.$$
Let $f$ be a 1-Lipschitz function. Then for all $x_0 \in S_0^0$, $u >0$ and $t >0$,
\begin{equation}
\label{ineq.conca}
\P_{\delta_{x_0}} \Big (\{|f(X_t)-\E_{\delta_{x_0}} [f(X_t)] |\ge u \}\cap \{X_s
\in S_0^0 : \mbox{ } 0 \le s \le t\} \Big )\le
2e^{-u^2/2(\sum_{i=1}^t \alpha_i^2)}.
\end{equation}
\end{thmenumerate}
\end{theorem}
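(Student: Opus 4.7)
The plan is to decompose $f(X_t) - \E_{\delta_{x_0}}[f(X_t)]$ as a sum of bounded martingale differences via the Doob martingale of $f(X_t)$, then apply Hoeffding's form of Azuma's inequality. Let $g_i \deq P^{t-i} f$, so that $g_t = f$ and $g_0(x_0) = \E_{\delta_{x_0}}[f(X_t)]$. By the Markov property, $\E_{\delta_{x_0}}[f(X_t) \mid \cF_i] = g_i(X_i)$; setting $D_i = g_i(X_i) - (Pg_i)(X_{i-1})$, we have $f(X_t) - \E_{\delta_{x_0}}[f(X_t)] = \sum_{i=1}^t D_i$, a sum of martingale differences under $\P_{\delta_{x_0}}$.

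The main step is to control the conditional range of $D_i$. Given $X_{i-1} = x$, $X_i$ takes values in $N[x] \deq \{y : P(x,y) > 0\}$, and any two elements of $N[x]$ are at graph-distance at most $2$. By Kantorovich--Rubinstein duality, for any two states $u, v$,
\begin{equation*}
|g_i(u) - g_i(v)| = |(P^{t-i}f)(u) - (P^{t-i}f)(v)| \le \dw(\delta_u P^{t-i}, \delta_v P^{t-i}),
\end{equation*}
since $f$ is 1-Lipschitz. Iterating the triangle inequality for $\dw$ along a shortest path in the graph and using hypothesis \eqref{cond-gen} gives $|g_i(u) - g_i(v)| \le d(u,v)\,\alpha_{t-i}$ (using the convention $\alpha_0 = 1$ at $i=t$, where $g_t = f$ is directly 1-Lipschitz). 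Hence the conditional range of $D_i$ given $\cF_{i-1}$ is at most $2\alpha_{t-i}$, and the Hoeffding form of Azuma's inequality yields the sub-Gaussian bound in \eqref{ineq.conca-general} (after reindexing the sum).

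For part (ii), the previous argument fails once the chain leaves $S_0$, since hypothesis \eqref{cond-gen-1} only controls $\dw$ between pairs in $S_0$. We handle this via a stopping-time truncation. Let $\tau = \min\{s \ge 0 : X_s \notin S_0^0\}$ and set $\tilde D_i = D_i \ett{\tau \ge i}$. Since $\{\tau \ge i\} \in \cF_{i-1}$, the sequence $(\tilde D_i)$ remains a martingale difference sequence. On $\{\tau \ge i\}$ we have $X_{i-1} \in S_0^0$, so every $y \in N[X_{i-1}]$ lies in $S_0$ by the definition of $S_0^0$, and hypothesis \eqref{cond-gen-1} applies to all pairs in $N[X_{i-1}]$, giving the same range bound $|\tilde D_i| \le 2\alpha_{t-i}$. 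On the event $\{\tau > t\} = \{X_s \in S_0^0 : 0 \le s \le t\}$, we have $\tilde D_i = D_i$ for every $i \le t$, so $\sum_{i=1}^t \tilde D_i = f(X_t) - \E_{\delta_{x_0}}[f(X_t)]$ there, and Azuma--Hoeffding applied to $\sum \tilde D_i$ yields \eqref{ineq.conca}.

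I expect the principal technical hurdle to be the range bound $|g_i(u) - g_i(v)| \le d(u,v)\alpha_{t-i}$: the hypothesis controls only one-step neighbours, whereas the conditional range of $D_i$ involves pairs at graph-distance up to $2$ inside $N[X_{i-1}]$. Kantorovich duality is the essential tool, converting the Wasserstein contraction of the kernel into a pointwise Lipschitz bound on the conditional-expectation functions $g_i$; the triangle inequality for $\dw$ then handles distance-$d$ pairs with a factor of $d$. In part (ii) one must further verify that the joining path stays in $S_0$, which is automatic since any path of length at most $2$ between elements of $N[X_{i-1}]$ can be routed through $X_{i-1} \in S_0^0 \subseteq S_0$.
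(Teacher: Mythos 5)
Your proposal is correct and follows essentially the same route as the paper's proof: both form the Doob martingale $Z_j=\E[f(X_t)\mid\cF_j]=(P^{t-j}f)(X_j)$, bound the conditional range of each increment via Kantorovich--Rubinstein duality (so $|(P^{t-j}f)(u)-(P^{t-j}f)(v)|\le\dw(\delta_uP^{t-j},\delta_vP^{t-j})$) plus the triangle inequality for $\dw$ and hypothesis~\eqref{cond-gen}, and then apply a bounded-differences martingale inequality (the paper cites McDiarmid's Theorem~3.14, which is exactly the ``Hoeffding form of Azuma'' with squared conditional ranges). For part~(ii), your stopping-time truncation $\tilde D_i=D_i\ett{\tau\ge i}$ is a clean variant of the paper's direct appeal to the event-restricted version of that same inequality (the second display of Lemma~\ref{thm.mart}), and your observation that distance-$2$ pairs in $N[X_{i-1}]$ can be routed through $X_{i-1}\in S_0$ is precisely the point the paper also relies on.
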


If the Markov chain becomes contractive after a finite number of
steps, then one can deduce from Theorem~\ref{thm.conca-general}
concentration results for the stationary measure of the Markov chain,
as in the following corollary.
\begin{corollary}
\label{cor.conca-general}
\begin{thmenumerate}
\item
Suppose that there exists $x \in S$ and a sequence $\alpha_i : S \to
\R^+$ of functions such that, for all $y \in S$,
\begin{equation}
\label{cond-gen-2}
\dw (\delta_x P^i,\delta_y P^i) \le \alpha_i (y),
\end{equation}
where $\alpha_i (y) \to 0$ as $i \to \infty$ for each $y$, and
\begin{equation}
\label{cond-gen-2a}
\sup_k \E_{\delta_x} [\alpha_i(X_k)]= \sup_k (P^k \alpha_i)(x) 
\to 0 \quad \mbox{ as } i \to \infty.
\end{equation}
Then $(X_t)$ has a
unique stationary measure $\pi$ and, for all $y \in S$, $\delta_y P^t
\to \pi$ as $t \to \infty$. 
\item
Suppose that~(\ref{cond-gen}) holds, and 
the constants $\alpha_i$ in
Theorem~\ref{thm.conca-general} satisfy $\sum_i \alpha_i^2 <\infty$. 
Suppose further there exists $x \in S$ such that 
$$\sup_k (P^k g)(x) < \infty,$$
where $g(y) = d(x,y)$.
Then
$(X_t)$ has a unique stationary measure $\pi$, and $\delta_y P^t \to \pi$
as $t \to \infty$ for each $y$. 

Furthermore, let $X$ be a stationary copy of $X_t$. Then, for all $u > 0$, and
uniformly over all 1-Lipschitz functions $f$,
 \begin{equation}
\label{ineq.conca-gen-stat}
\P_{\pi} (|f(X)-\E_{\pi} [f(X)] |\ge 2u ) \le
2e^{-u^2/2(\sum_{i=1}^{\infty} \alpha_i^2)}.
\end{equation}
\item
Suppose that $(X_t)$ has a unique stationary measure $\pi$ and
condition~(\ref{cond-gen-1}) holds, where $\sum_i \alpha_i^2 < \infty$.
Let $x \in S_0^0$, and suppose  $\delta > 0$ and 
$t_0 > 0$ are such that $\dw (\delta_x P^{t_0},
\pi) < \delta$ and 
$$\P_{\delta_x} (X_t \in S_0^0 \mbox{ for } t \le
t_0) \ge 1-\delta.$$
Let $X$ be a stationary copy of $X_t$.
Then, for all $u \ge \delta$, uniformly over all 1-Lipschitz functions $f$,
\begin{equation}
\label{ineq.conca-gen-stat-1}
\P_{\pi} (|f(X)-\E_{\pi} [f(X)] |\ge 2u ) \le
2e^{-u^2/2(\sum_{i=1}^{t_0} \alpha_i^2)} + 2\delta.
\end{equation}
\end{thmenumerate}
\end{corollary}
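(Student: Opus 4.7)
My plan is to reduce each part to Theorem~\ref{thm.conca-general} via Cauchy-sequence arguments in the Wasserstein metric, exploiting that on a discrete metric space with $d\ge 1$ on distinct points, $\dw\ge\dtv$ (since any coupling satisfies $d(X,Y)\ge\ett{X\neq Y}$); hence every $\dw$-convergent sequence also converges in total variation, which lets us compare probabilities of arbitrary (not just Lipschitz-average) events. For (i), I would show that $(\delta_x P^t)_{t\ge 0}$ is Cauchy in $\dw$: writing $\delta_x P^{t+s}=\int\delta_y P^t\,d(\delta_x P^s)(y)$ and using convexity of $\dw$ under mixtures,
\[
\dw(\delta_x P^{t+s},\delta_x P^t)\le \int\dw(\delta_y P^t,\delta_x P^t)\,d(\delta_x P^s)(y)\le (P^s\alpha_t)(x),
\]
whose supremum over $s$ tends to $0$ as $t\to\infty$ by (\ref{cond-gen-2a}). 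Since $\dw\ge\dtv$, the sequence has a total-variation limit $\pi$, automatically a stationary probability measure. The triangle bound $\dw(\delta_y P^t,\pi)\le\alpha_t(y)+\sup_s(P^s\alpha_t)(x)$ then shows $\delta_y P^t\to\pi$ for every $y$; uniqueness follows because any stationary $\pi'$ satisfies $\pi'(A)=\int\delta_y P^t(A)\,d\pi'(y)\to\pi(A)$ by dominated convergence.

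For (ii), path-coupling (triangle inequality) gives $\dw(\delta_x P^i,\delta_y P^i)\le\alpha_i\,d(x,y)$ for all $x,y$, so (\ref{cond-gen-2}) holds with $\alpha_i(y)=\alpha_i g(y)$ where $g(y)=d(x,y)$. Summability $\sum\alpha_i^2<\infty$ forces $\alpha_i\to 0$, and combined with $\sup_k(P^k g)(x)<\infty$ this yields (\ref{cond-gen-2a}); part (i) then gives existence, uniqueness, and $\delta_{x_0}P^t\to\pi$ in both $\dw$ and $\dtv$. For the concentration statement, fix a 1-Lipschitz $f$, set $\mu_t=(\delta_{x_0}P^t)(f)$ and $\mu=\pi(f)$, and note $|\mu_t-\mu|\le\dw(\delta_{x_0}P^t,\pi)\to 0$. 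For any $u>0$ and $t$ large enough that $|\mu_t-\mu|\le u$, the event $\{|f(X_t)-\mu|\ge 2u\}$ sits inside $\{|f(X_t)-\mu_t|\ge u\}$, so Theorem~\ref{thm.conca-general}(i) gives
\[
\P_{\delta_{x_0}}(|f(X_t)-\mu|\ge 2u)\le 2e^{-u^2/2(\sum_{i=1}^t\alpha_i^2)}.
\]
Since $\dtv(\delta_{x_0}P^t,\pi)\to 0$, the left side converges to $\P_\pi(|f(X)-\mu|\ge 2u)$, and letting $t\to\infty$ yields (\ref{ineq.conca-gen-stat}).

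For (iii) the argument is parallel but uses Theorem~\ref{thm.conca-general}(ii) and runs only to the finite time $t_0$. Applying that theorem and absorbing the at-most-$\delta$ probability of leaving $S_0^0$ yields
\[
\P_{\delta_x}(|f(X_{t_0})-\mu_{t_0}|\ge u)\le 2e^{-u^2/2(\sum_{i=1}^{t_0}\alpha_i^2)}+\delta,
\]
where $\mu_{t_0}=(\delta_x P^{t_0})(f)$. Since $|\mu_{t_0}-\mu|\le\dw(\delta_x P^{t_0},\pi)<\delta\le u$ (the sole use of $u\ge\delta$), the bound upgrades to the event $\{|f(X_{t_0})-\mu|\ge 2u\}$; and since $\dtv(\delta_x P^{t_0},\pi)<\delta$, swapping $\P_{\delta_x}$ for $\P_\pi$ costs at most another $\delta$, giving (\ref{ineq.conca-gen-stat-1}). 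The main obstacle is the bookkeeping in part (i)---identifying $\sup_s(P^s\alpha_t)(x)$ as the correct quantity controlling the Cauchy differences, and using $\dw\ge\dtv$ to upgrade $\dw$-convergence to convergence of probabilities of the non-Lipschitz events appearing on the right of the concentration inequalities.
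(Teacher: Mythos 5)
Your proposal is correct and follows essentially the same route as the paper: establish that $(\delta_x P^t)_t$ is Cauchy in $\dw$ via convexity and condition~(\ref{cond-gen-2a}), identify the limit as the unique stationary $\pi$, reduce to Theorem~\ref{thm.conca-general} by specialising $\alpha_i(y)=\alpha_i\,d(x,y)$, and transfer the concentration bound to $\pi$ by using that both the means and the event-probabilities differ by at most $\dw(\delta_x P^{t},\pi)$ (the paper phrases the existence/uniqueness step via completeness of the Wasserstein space and an $\epsilon$-then-let-$\epsilon\to0$ argument, while you instead invoke $\dw\ge\dtv$ and take a direct limit in $t$, but these are cosmetic variants of the same idea). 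The bookkeeping in parts (ii) and (iii) — splitting the deviation $2u$ into $u$ for the mean shift and $u$ for the fluctuation, and paying $\delta$ for leaving $S_0^0$ plus $\delta$ for swapping $\P_{\delta_x}$ with $\P_\pi$ — matches the paper's proof exactly.
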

\begin{proof}
\begin{thmenumerate}
\item
Consider the sequence $P_i$ of measures on $(S, \cP(S))$ given by
$P_i= \delta_x P^i$; we have, using the coupling characterisation of
the Wasserstein distance,
\begin{eqnarray*}
\dw (P_i,P_{i+k}) & = & \dw(\delta_x P^i, (\delta_x P^k) P^i) \le \sum_{y
\in S} (\delta_x P^k)(y) \dw (\delta_x P^i, \delta_y P^i)\\
& \le & \sum_{y
\in S} (\delta_x P^k)(y) \alpha_i (y) \le \sup_k \E_{\delta_x}
[\alpha_i(X_k)] \to 0
\end{eqnarray*}
as $i \to \infty$, by assumption. Thus the sequence $(P_i)$ is a
Cauchy sequence and so, since the space of probability measures on $(S,
\cP (S))$ is complete with respect to the Wasserstein distance, it
must converge to a probability measure $\pi$ on $(S, \cP (S))$. It is
obvious that this measure must be stationary for $P$.

Now, take $y \in S$, and let $Q_i = \delta_y P^i$. Then
$$\dw (P_i, Q_i ) = \dw (\delta_x P^i , \delta_y P^i) \le \alpha_i (y)
\to 0 \quad \mbox{ as } i \to \infty.$$ 
It follows that $Q_i \to \pi$ as $i \to \infty$, and so $\pi$ must be
the unique stationary measure.
\item
The assumption that $\sum_i \alpha_i^2 < \infty$ implies that
$\alpha_i \to 0$ as $i \to \infty$. 
Then it is easily seen (using the fact that the distance $d(y,z)$
between each pair $y,z$ of states in finite) that
conditions~(\ref{cond-gen-2}) and~(\ref{cond-gen-2a}) of part (i) hold
for $x$, with $\alpha_i (y) \le \alpha_i d(x,y)$, and so, as 
in~(i) one can prove that 
there exists a (necessarily unique) stationary
measure $\pi$, and that $\delta_x P^t \to \pi$ as $t \to \infty$ for
each $x \in S$. 

Let us now prove the concentration of measure result, inequality~(\ref{ineq.conca-gen-stat}).
Take some $x \in S$. Given $\epsilon > 0$, for $t$ large enough the
Wasserstein distance, and hence the total variation distance, between
$\delta_x P^t$ and $\pi$ is at most $\epsilon$. Then, for $u \ge \epsilon$
and all such $t$, by Theorem~\ref{thm.conca-general} part (i),
\begin{eqnarray*}
\P_{\pi} (|f(X)-\E_{\pi} [f(X)] |\ge 2u ) & \le &
\P_{\delta_{x}} (|f(X_t)-\E_{\delta_x} [f(X_t)] |\ge u )+ \eps \\ 
& \le & 2e^{-u^2/2(\sum_{i=1}^{\infty} \alpha_i^2)} + \eps.
\end{eqnarray*}
Here we have used the fact that 
$$|\E_{\pi} [f(X)] - \E_{\delta_x} [f(X_t)]| \le \epsilon \le u.$$
Since $\eps$ is arbitrary, the result follows.

\item 
Let 
$$A_{t_0}= \{\omega: X_t(\omega) \in S_0 \mbox{ } \forall t \in [0,t_0] \}.$$
Arguing as in~(ii), and using Theorem~\ref{thm.conca-general}
part (ii), we can write, for $u \ge \delta$,
\begin{eqnarray*}
\P_{\pi} (|f(X)-\E_{\pi} [f(X)] |\ge 2u ) & \le & 
\P_{\delta_{x}} (|f(X_{t_0})-\E_{\delta_x} [f(X_{t_0})] |\ge u )+ \delta \\ 
& \le & \P_{\delta_{x}} \Big (\{|f(X_{t_0})-\E_{\delta_x} [f(X_{t_0})] |\ge
u \} \cap A_{t_0} \Big )\\
& + & 2\delta \\
& \le & 2e^{-u^2/2(\sum_{i=1}^{t_0} \alpha_i^2)}  +  2\delta,
\end{eqnarray*}
as required.

\end{thmenumerate}
\end{proof}

\medskip

To prove Theorem~\ref{thm.conca-general}, we
shall make use of a concentration inequality from~\cite{cmcd98}.
Let $(\tilde{\Omega}, \tilde{\cF}, \tilde{\P})$  be a
probability space, with $\tilde{\Omega}$ finite. Let
$\tilde{\mathcal G} \subseteq \tilde{\cF}$ be a
$\sigma$-field. Given a bounded random variable $Z$ on
$(\tilde{\Omega}, \tilde{\cF}, \tilde{\P})$, the {\it supremum} of
$Z$ in $\tilde{\mathcal G}$ is the $\tilde{\mathcal G}$-measurable
function given by 
\begin{equation}
\label{defn-sup}
\sup (Z| \tilde{\mathcal G}) (\tilde{\omega}) = \min_{A \in \tilde{\mathcal G}: \tilde{\omega} \in A} \max_{\tilde{\omega}'
  \in A} Z(\omega').
\end{equation}
Thus $\sup (Z)$ takes the value at
$\tilde{\omega}$ equal to the maximum value of $Z$ over the `smallest' event in
$\tilde{\mathcal G}$ containing $\tilde{\omega}$.
Since $\tilde{\Omega}$ is finite, we are
assured that the smallest event
containing $\omega$ does exist; the arguments used here would
work also in many cases where $\tilde{\Omega}$ is countably infinite.

The {\it conditional range} of $Z$ in $\tilde{\mathcal G}$,
denoted by $\ran (Z)$, is the $\tilde{\mathcal G}$-measurable function
\begin{equation}
\label{defn-cond-range}
\ran (Z \mid \tilde{\mathcal G}) = \sup (Z| \tilde{\mathcal G})+\sup (-Z| \tilde{\mathcal
  G}).
\end{equation}

Let $\{\emptyset, \tilde{\Omega} \} = \tilde{\cF}_0 \subseteq \tilde{\cF}_1
\subseteq \ldots $ be a filtration in $\tilde{\cF}$,
and let $Z_0, \ldots , $ be the martingale obtained by setting $Z_t
= \E (Z|{\tilde{\cF}}_t)$ for each $t$. 
For each $t$ let $\ran_t$ denote $\ran (Z_t|{\tilde{\cF}}_{t-1})$;
by definition, $\ran_t$ 
is an $\tilde{\cF}_{t-1}$-measurable function.
For each $t$, let the {\it sum of squared
  conditional ranges} $R_t^2$ be the random variable $\sum_{i=1}^t \ran^2_i$,
and let the {\it maximum sum of squared conditional ranges} $\hat r_t^2$
be the supremum of the random variable $R_t^2$, that is
$$\hat r_t^2 = \sup_{\tilde{\omega} \in \tilde \Omega} R_t^2 (\tilde \omega).$$

The following result is 
Theorem 3.14 in~\cite{cmcd98}.

\begin{lemma}
\label{thm.mart}
Let $Z$ be a bounded random variable on a probability space $(\tilde{\Omega},
\tilde{\cF}, \tilde{\P})$ with $\tilde{\E} (Z) = m$. Let
$\{\emptyset, \tilde{\Omega} \} = \tilde{\cF}_0 \subseteq
\tilde{\cF}_1 \subseteq 
\ldots \subseteq \tilde{\cF}_t$ be a filtration in $\tilde{\cF}$.
Then for any $u \ge 0$,
$$ \tilde{\P} (|Z-m |\ge u) \le 2e^{-2u^2/\hat r_t^2}.$$
More generally, for any $u \ge 0$ and any value $r_t^2$,
$$ \tilde{\P} (\{|Z-m |\ge u \} \cap \{R_t^2 \le r_t^2\}) \le 2e^{-2u^2/r_t^2}.$$
\end{lemma}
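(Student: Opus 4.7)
The plan is to prove this via the standard Azuma–Hoeffding machinery applied to the Doob martingale $Z_i = \tilde{\E}(Z \mid \tilde{\cF}_i)$, together with a stopping time argument to get the refined bound. Set $D_i = Z_i - Z_{i-1}$, so that $\sum_{i=1}^t D_i = Z_t - Z_0$. Since $\tilde{\cF}_0$ is trivial we have $Z_0 = m$, and one can arrange (by enlarging the filtration if necessary, or by working with $Z_t$ in place of $Z$ since $Z_t \to Z$ in the relevant sense) that $Z_t = Z$. Then $Z - m = \sum_{i=1}^t D_i$.

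The first key step is a conditional Hoeffding bound: for each $i$ and each real $s$,
\begin{equation*}
  \tilde{\E}(e^{s D_i} \mid \tilde{\cF}_{i-1}) \le e^{s^2 \operatorname{ran}_i^2 / 8} \quad \text{a.s.}
\end{equation*}
To see this, fix $\tilde{\omega}$ and let $A$ be the smallest event in $\tilde{\cF}_{i-1}$ containing $\tilde{\omega}$. Conditional on $A$, the random variable $Z_i$ takes values in an interval of length at most $\operatorname{ran}_i(\tilde{\omega})$, by the definitions \eqref{defn-sup} and \eqref{defn-cond-range}. Since $Z_{i-1}$ is $\tilde{\cF}_{i-1}$-measurable and $\tilde{\E}(D_i \mid \tilde{\cF}_{i-1}) = 0$, the classical Hoeffding lemma (a mean-zero variable supported on an interval of length $L$ has MGF bounded by $e^{s^2 L^2/8}$) applied to $D_i = Z_i - Z_{i-1}$ on $A$ yields the claim.

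Next, iterate in the standard way. For any $s > 0$, by taking conditional expectations outward from the innermost level,
\begin{equation*}
  \tilde{\E}\bigl(e^{s(Z - m)}\bigr) = \tilde{\E}\Bigl(e^{s \sum_{i=1}^{t-1} D_i} \cdot \tilde{\E}(e^{s D_t} \mid \tilde{\cF}_{t-1})\Bigr) \le \tilde{\E}\Bigl(e^{s \sum_{i=1}^{t-1} D_i} \cdot e^{s^2 \operatorname{ran}_t^2/8}\Bigr),
\end{equation*}
and continuing inductively,
\begin{equation*}
  \tilde{\E}\bigl(e^{s(Z-m)}\bigr) \le \tilde{\E}\bigl(e^{s^2 R_t^2/8}\bigr) \le e^{s^2 \hat r_t^2/8}.
\end{equation*}
Markov's inequality gives $\tilde{\P}(Z - m \ge u) \le e^{-su} e^{s^2 \hat r_t^2/8}$; optimizing with $s = 4u/\hat r_t^2$ yields $e^{-2u^2/\hat r_t^2}$. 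Applying the same argument to $-Z$ and taking a union bound gives the first inequality.

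For the second, sharper inequality, introduce the stopping time
\begin{equation*}
  \tau = \min\{i \le t : R_i^2 > r_t^2\} \wedge t,
\end{equation*}
which is a stopping time because $R_i^2$ is $\tilde{\cF}_{i-1}$-measurable, so $\{\tau \ge i\} = \{R_{i-1}^2 \le r_t^2\} \in \tilde{\cF}_{i-1}$. Consider the stopped martingale $Z_i^\tau = Z_{i \wedge \tau}$ with differences $D_i^\tau = D_i \mathbf{1}\{\tau \ge i\}$. Its conditional ranges satisfy $\operatorname{ran}(Z_i^\tau \mid \tilde{\cF}_{i-1}) \le \operatorname{ran}_i \mathbf{1}\{\tau \ge i\}$, so the sum of squared conditional ranges of the stopped martingale is everywhere bounded by $r_t^2$. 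Hence $\tilde{\P}(|Z_t^\tau - m| \ge u) \le 2 e^{-2u^2/r_t^2}$ by part one. On the event $\{R_t^2 \le r_t^2\}$ we have $\tau = t$ and so $Z_t^\tau = Z$, and the second inequality follows.

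The main subtlety is the first step: justifying that the martingale difference $D_i$ really is, conditionally on $\tilde{\cF}_{i-1}$, supported in an interval of length $\operatorname{ran}_i$. This relies on the particular definition \eqref{defn-sup} of $\sup(Z \mid \tilde{\cG})$ as a pointwise minimum over $\tilde{\cG}$-measurable events containing $\tilde{\omega}$, which is why finiteness (or at least discreteness) of $\tilde{\Omega}$ is hypothesised. Once that is in place the rest is routine Azuma–Hoeffding plus an optional-stopping argument.
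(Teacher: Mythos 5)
The paper does not in fact prove this lemma --- it is quoted verbatim as Theorem~3.14 of McDiarmid's survey~\cite{cmcd98}, so there is no ``paper's own proof'' to compare against; the question is whether your argument is correct as a self-contained proof. It has two genuine gaps, both of which disappear if one uses the supermartingale that McDiarmid uses.

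\textbf{First gap.} The displayed chain
$\tilde{\E}\bigl(e^{s(Z-m)}\bigr) \le \tilde{\E}\bigl(e^{s^2 R_t^2/8}\bigr) \le e^{s^2 \hat r_t^2/8}$
is not supported by the iteration you describe. After peeling off the $t$-th step you have
$\tilde{\E}\bigl[e^{s\sum_{i<t}D_i}\,e^{s^2\operatorname{ran}_t^2/8}\bigr]$,
but $\operatorname{ran}_t$ is $\tilde{\cF}_{t-1}$-measurable, not $\tilde{\cF}_{t-2}$-measurable. When you next condition on $\tilde{\cF}_{t-2}$, the factor $e^{s^2\operatorname{ran}_t^2/8}$ cannot be pulled outside
$\tilde{\E}\bigl(e^{sD_{t-1}}\,e^{s^2\operatorname{ran}_t^2/8}\,\big|\,\tilde{\cF}_{t-2}\bigr)$,
so the claimed factoring fails already at the second step. (The naive iteration would only yield the weaker exponent $\sum_i\bigl(\sup_\omega \operatorname{ran}_i(\omega)\bigr)^2$, not $\hat r_t^2 = \sup_\omega \sum_i\operatorname{ran}_i^2(\omega)$.) The correct statement the iteration delivers is
\begin{equation*}
\tilde{\E}\Bigl[\exp\Bigl(s(Z-m)-\tfrac{s^2}{8} R_t^2\Bigr)\Bigr]\le 1,
\end{equation*}
equivalently, $W_k \= \exp\bigl(s(Z_k-m)-\tfrac{s^2}{8}R_k^2\bigr)$ is a supermartingale: since $\operatorname{ran}_k$ and $R_{k-1}^2$ are both $\tilde{\cF}_{k-1}$-measurable, the conditional Hoeffding bound gives $\tilde{\E}(W_k\mid\tilde{\cF}_{k-1})\le W_{k-1}$, and the induction closes. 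From $\tilde{\E}[W_t]\le 1$ and $R_t^2\le\hat r_t^2$ pointwise one deduces $\tilde{\E}(e^{s(Z-m)})\le e^{s^2\hat r_t^2/8}$ and the first inequality of the lemma. So your conclusion is correct but your intermediate bound is not one you have established.

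\textbf{Second gap.} Your stopping time is off by one. With
$\tau=\min\{i\le t: R_i^2>r_t^2\}\wedge t$, on the event $\{\tau<t\}$ the stopped process accumulates squared conditional ranges up to and \emph{including} time $\tau$, so its total is $R_\tau^2$, which by the very definition of $\tau$ exceeds $r_t^2$. Thus the assertion ``the sum of squared conditional ranges of the stopped martingale is everywhere bounded by $r_t^2$'' is false. One must stop before the overshoot, say with $\tau'=\min\{i\ge 0: R_{i+1}^2>r_t^2\}\wedge t$, which is a legitimate stopping time because $R_{i+1}^2$ is $\tilde{\cF}_i$-measurable; then $\{\tau'\ge i\}=\{R_i^2\le r_t^2\}$ and the stopped sum is $\sum_i\operatorname{ran}_i^2\,\mathbf 1\{R_i^2\le r_t^2\}\le r_t^2$ everywhere, while $\tau'=t$ on $\{R_t^2\le r_t^2\}$. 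Alternatively, and more cleanly, the same supermartingale $W_t$ gives the second inequality at once: on $\{Z-m\ge u\}\cap\{R_t^2\le r_t^2\}$ one has $W_t\ge\exp(su - s^2 r_t^2/8)$, and Markov plus $\tilde{\E}[W_t]\le 1$ gives the stated tail bound after optimizing $s$, with the lower tail handled by symmetry.

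In short, the martingale decomposition and conditional Hoeffding step are the right ingredients, but the iteration you wrote down does not establish the intermediate inequality, and the stopping time must stop one step earlier; replacing both by the supermartingale $W_k$ repairs the proof and is in fact what~\cite{cmcd98} does.
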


\begin{proofof}{Theorem~\ref{thm.conca-general}}
Let $f:S \rightarrow \R$ be 1-Lipschitz. Fix a time $t \in \N$,
$x_0 \in S$ and consider the evolution of $X_t$ conditional on
$X_0=x_0$ for $t$ steps, that is until time $t$. Since we have assumed
that there are only a finite number of possible transitions from any
given $x \in S$, we can build this conditional process until
time $t$ on a
finite probability space $(\tilde{\Omega}, \tilde{\cF},
\tilde{P}_{\delta_{x_0}})$: we can take $\tilde{\Omega}$ to be
the finite set of all possible paths of the process starting at time 0 in
state $x_0$ until time $t$, and $\tilde{\cF}$ to be the
power set of $\tilde{\Omega}$.

In the conditional space, for each time $j= 0, \ldots, t$, let $\tilde{\cF}_j=
\sigma (X_0, \ldots , X_j)$, the $\sigma$-field generated by $X_0,
\ldots , X_j$; so $\tilde{\cF}_0= \{\emptyset, \tilde{\Omega}\}$ and
$\tilde{\cF}_t = \tilde{\cF}$. 
We write $\E$ instead of $\tilde{\E}$ in what follows to
lighten the notation.

Consider the random variable $Z=
f(X_t): \tilde{\Omega} \to \R$. Also, for $j =0, \ldots, t$ let $Z_j$
be given by
$$Z_j = \E [f(X_t)|\tilde{\cF}_j] = \E_{\delta_{x_0}}
[f(X_t)| X_0, \ldots , X_j]= (P^{t-j}
f)(X_j),$$
where we have used the Markov property in the last equality.

Fix $1 \le j \le t$; we want to upper bound $\ran_j= \ran (Z_j \mid
\tilde{\cF}_{j-1})$. 
Fix also
$x_1, \ldots , x_{j-1} \in S$, and for $x \in S$ consider
\begin{eqnarray*}
g(x) 
 & = & \E [f(X_t)|X_j=x] =  \E [f(X_{t-j})|X_0=x]\\
& = & (P^{t-j}f)(x).
\end{eqnarray*}
Note that $Z_j(\tilde{\omega})\in \{g(x):
d(x,x_{j-1}) \le 1\}$ for $\tilde{\omega}$ such that $X_{j-1}
(\tilde{\omega}) = x_{j-1}$. It follows that, for such $\tilde{\omega}$, 
$$\ran_j (\tilde{\omega})
=  \sup_{x,y: d(x,x_{j-1}) \le 1, d(y,x_{j-1}) \le 1} |g(x)-g(y)|.$$

Let us prove part~(i) of the theorem. 
As $f$ is 1-Lipschitz,
\begin{eqnarray*}
\sup_{x,y: d(x,y) \le 2}|g(x)-g(y)| & = & \sup_{x,y: d(x,y)\le 2}
|(P^{t-j}f)(x)-(P^{t-j}f)(y)|\\
& = & \sup_{x,y: d(x,y)\le 2}|
\E_{\delta_xP^{t-j}}(f)-\E_{\delta_{y} P^{t-j}}(f)| \\
& \le & 2 \sup_{x,y: d(x,y)\le 1} |\E_{\delta_xP^{t-j}}(f)-\E_{\delta_{y}
P^{t-j}}(f)| \\
& \le & 2\sup_{x,y: d(x,y)\le 1}\dw (\delta_x P^{t-j},\delta_{y}
P^{t-j})\\
& \le & 2\alpha_{t-j},
\end{eqnarray*}
by assumption. We deduce that $\ran_j(\tilde{\omega}) \le 2
\alpha_{t-j}$ for all $\tilde{\omega} \in \tilde{\Omega}$.
It follows that
$$ \hat{r}^2_t(\tilde{\omega}) \le 4 \sum_{r=0}^{t-1} \alpha^2_{t-r},$$
uniformly over $\tilde{\omega} \in \tilde{\Omega}$.
Part (i) of Theorem~\ref{thm.conca-general} now follows from
Lemma~\ref{thm.mart}.

To prove~(ii), observe that the bound
$$\ran_j (\tilde{\omega})= \ran (Z_j \mid \tilde{\cF}_{j-1})
(\tilde{\omega}) 
\le 2 \alpha_{t-j}$$
still holds on the event $A_t=\{\tilde{\omega}: X_j (\tilde{\omega}) \in S_0^0 \mbox{ for } j=0,
\ldots, t\}$.
\end{proofof}

\medskip

The following special case of model satisfying the hypotheses of
Theorem~\ref{thm.conca-general} is of particular interest and 
has received considerable attention in computer science literature;
see for instance~\cite{bd97,dggjm01,j98}.
Suppose~(\ref{cond-gen}) is satisfied with $\alpha_i = \alpha^i$, where
$0 < \alpha < 1$ is a constant. In the language of~\cite{bd97} this
corresponds to the following situation. 
Consider different copies
$(X_t),(X'_t)$ of the process with initial states $x,x'$ respectively,
that is $X_0=x$ and $X'_0=x'$ almost surely. Suppose that we can couple
$(X_t),(X'_t)$ so that,
uniformly over all pairs of states $x,x' \in S$ with $d(x,x') =1$,
$$\E [d(X_1,X'_1)| X_0=x, X'_0=x'] \le \alpha ,$$
for a constant $0 < \alpha < 1$. Thus, under the coupling,
$(X_t),(X'_t)$ will be getting closer and closer together on average
as $t$ gets larger, which implies strong mixing properties~\cite{bd97,j98}.
Then, uniformly over
$x,x' \in S$ with $d(x,x') =1$, $\dw (\delta_x P,\delta_{x'} P) \le
\alpha$. By `path coupling'~\cite{bd97,j98}
$$ \E [d(X_1,X'_1)| X_0=x, X'_0=x'] \le \alpha d(x,x'),$$
and hence $\dw (\delta_x P,\delta_{x'} P) \le \alpha
\dw(\delta_x,\delta_{x'})$ for all pairs $x,x' \in S$.
By induction on $t$,
$$\dw (\delta_x P^t,\delta_{x'} P^t)\le \alpha^t d(x,x')$$
for all $x,x' \in S$ and all $t \in
\N$. Then, in the same notation as earlier, we can upper bound
$$\hat{r}^2 \le
4 \sum_{r=1}^t \alpha^{2r} \le 4 \alpha^2 (1-\alpha^2)^{-1},$$
for all $t$. Hence we obtain the following corollary.
\begin{corollary}
\label{cor.equilibrium} Suppose that there is a constant $0 < \alpha
< 1$ such that
\begin{equation}
\label{cond.1} \dw (\delta_x P,\delta_{x'} P) \le \alpha
\end{equation}
for all $x,x' \in S$ such that $d(x,x')=1$. Then for all $t > 0$
\begin{equation}
\label{ineq.equilibrium} \P_{\delta_{x_0}}
(|f(X_t)-\E_{\delta_{x_0}} [f(X_t)] |\ge u ) \le
2e^{-u^2(1-\alpha^2)/2 \alpha^2}
\end{equation}
for all $u > 0$, all $x_0 \in S$, and for every 1-Lipschitz
function on $S$.

Hence, if $X$ has the equilibrium distribution $\pi$ then, for all
$u > 0$ and every 1-Lipschitz function $f$,
\begin{equation}
\label{ineq.equilibrium-1} \P_{\pi} (|f(X)-\E_{\pi} [f(X)] |\ge u
) \le 2e^{-u^2(1-\alpha^2)/2 \alpha^2}
\end{equation}
\end{corollary}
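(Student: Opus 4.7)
The plan is to reduce the corollary to Theorem~\ref{thm.conca-general}(i) by showing that the one-step contraction hypothesis (\ref{cond.1}) propagates to all time scales with geometric rate. First, I would invoke the standard path-coupling argument sketched in the paragraph preceding the corollary: given $\dw(\delta_x P, \delta_{x'} P) \le \alpha$ for every adjacent pair, one extends it to arbitrary $x, x' \in S$ by composing one-step couplings along a geodesic path between $x$ and $x'$, yielding $\dw(\delta_x P, \delta_{x'} P) \le \alpha\, d(x, x')$. Then an induction on $t$, using the sub-additivity
$$\dw(\delta_x P^{t+1}, \delta_{x'} P^{t+1}) \le \sum_{z, z'} (\delta_x P)(z)(\delta_{x'} P)(z')\, \dw(\delta_z P^t, \delta_{z'} P^t)$$
together with the inductive hypothesis and the base case, gives $\dw(\delta_x P^t, \delta_{x'} P^t) \le \alpha^t d(x, x')$ for all $x, x' \in S$ and all $t \in \N$. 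Specialising to $d(x, x') = 1$ shows that hypothesis (\ref{cond-gen}) of Theorem~\ref{thm.conca-general}(i) holds with $\alpha_i = \alpha^i$.

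Next, I would plug into Theorem~\ref{thm.conca-general}(i) and sum the geometric series,
$$\sum_{i=1}^t \alpha_i^2 \;=\; \sum_{i=1}^t \alpha^{2i} \;\le\; \frac{\alpha^2}{1-\alpha^2},$$
so that the exponent in (\ref{ineq.conca-general}) becomes $u^2(1-\alpha^2)/(2\alpha^2)$, immediately yielding (\ref{ineq.equilibrium}). Note that this bound is uniform in $t$, which is what makes the passage to equilibrium possible.

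For the equilibrium statement (\ref{ineq.equilibrium-1}), I would appeal to Corollary~\ref{cor.conca-general}(ii), since $\sum_{i=1}^\infty \alpha^{2i} < \infty$; alternatively, one can pass to the limit $t \to \infty$ directly in (\ref{ineq.equilibrium}), using that under the hypothesis $X \sim \pi$ stated in the corollary, a coupling of $\delta_{x_0} P^t$ with $\pi$ of Wasserstein cost at most $\alpha^t \E_\pi[d(x_0, X)]$ controls both $|\E_{\delta_{x_0}} f(X_t) - \E_\pi f(X)|$ and the difference between the two tail probabilities. The only mildly delicate step is verifying the auxiliary finiteness condition $\sup_k (P^k g)(x) < \infty$ (with $g(y) = d(x,y)$) required by Corollary~\ref{cor.conca-general}(ii), since pure one-step contraction in Wasserstein does not automatically bound $\E_x[d(x, X_k)]$ on unbounded state spaces; however, in the setting of the corollary the equilibrium $\pi$ is assumed to exist, which (via the stationarity identity $\pi = \pi P^k$ combined with $\dw(\delta_x P^k, \pi) \le \alpha^k \E_\pi[d(x, X)]$) makes the limiting argument go through whenever $\E_\pi[d(x_0, X)] < \infty$ for some $x_0$. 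I expect this integrability check to be the main (and essentially only) technical obstacle; once the geometric propagation $\dw(\delta_x P^t, \delta_{x'} P^t) \le \alpha^t d(x,x')$ is in hand, both inequalities are direct applications of the machinery already established.
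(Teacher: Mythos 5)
Your derivation of (\ref{ineq.equilibrium}) matches the paper exactly: the path-coupling upgrade to $\dw(\delta_x P^t,\delta_{x'}P^t)\le\alpha^t d(x,x')$, the choice $\alpha_i=\alpha^i$ in Theorem~\ref{thm.conca-general}(i), and the geometric sum $\sum_{i\ge1}\alpha^{2i}\le\alpha^2/(1-\alpha^2)$ are all the steps the paper itself takes in the paragraph preceding the corollary. For the equilibrium bound (\ref{ineq.equilibrium-1}) one small caution on your two proposed routes: invoking Corollary~\ref{cor.conca-general}(ii) literally would give the deviation bound at $2u$ rather than $u$, i.e.\ an exponent worse by a factor of four, so it is the direct $t\to\infty$ passage in (\ref{ineq.equilibrium}) that actually reproduces the stated constant. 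Your integrability worry is also slightly heavier than it needs to be: since the state graph is locally finite and each step moves distance at most one, $\E_{x_0}[d(x_0,X_1)]<\infty$, and a telescoping bound $\dw(\delta_{x_0}P^t,\delta_{x_0}P^{t+s})\le\sum_{j=0}^{s-1}\alpha^{t+j}\E_{x_0}[d(x_0,X_1)]\le\alpha^t\E_{x_0}[d(x_0,X_1)]/(1-\alpha)$ shows $\delta_{x_0}P^t$ is $\dw$-Cauchy, hence convergent to the (assumed, and by contraction unique) $\pi$, without needing to know $\E_\pi[d(x_0,X)]<\infty$ a priori. Combined with $\dw\ge\dtv$ on the graph metric, this lets you pass to the limit in both $\mu_t\to\mu$ and the tail probability, giving (\ref{ineq.equilibrium-1}) as you describe.
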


The particular choice of $\alpha = 1- c_1/n$ for a constant $c_1 > 0$
corresponds to the `optimal' mixing time $O(n \log n)$ for a Markov
chain in a system with size measure $n$, and gives concentration of
measure in equilibrium of the form
\begin{equation}
\label{ineq.equilibrium-1a} \P_{\pi} (|f(X_t)-\E_{\pi} [f(X_t)] |\ge u
) \le 2e^{-u^2/c_2 n},
\end{equation}
where $c_2>0$ is a constant. This is the case, for example, for the
subcritical ($\beta <1$) mean-field Ising model discussed in Section~\ref{sec:examples} -- see
for example~\cite{lpw} or~\cite{llp08} for a description of the
coupling that implies fast decay of the Wasserstein distance. The same
also applies to the Glauber dynamics for colourings on bounded-degree
graphs analysed in~\cite{dgm00} (see also~\cite{dggjm01}
and~\cite{m99}). The application is straightforward when the number of colours $k$ is 
greater than $2 D$, where $D$ is the maximum degree of the
graph. It is only a little more involved in the case $(2-\eta )D \le k
\le 2D$, where the proof in~\cite{dgm00} relies on {\em delayed
path-coupling}~\cite{ckkl99}, whereby a new Markov chain is used with one step
corresponding to $c n$ steps of the original one, $n$ being the size
of the graph to colour. 

On the other hand $\alpha = 1-6/(n^3-n)$ for
the Glauber dynamics on linear extensions of a partial order of size
$n$~\cite{bd97,j98} gives an upper bound $O(n^3 \log n)$ on mixing. The
corresponding bound on deviations of a 1-Lipschitz function from its mean of size
$u$ is of the form $2e^{-u^2/cn^3}$, which is useless. However, one
cannot do much better in general. To see this, consider the partial
order on $n$ points consisting of a chain of length $n-1$ and a single
incomparable element. It is not hard to check that in this case the
mixing time is of the order $n^3$ -- see~\cite{bd97} for details. It
is also easy to see  that there is no normal
concentration of measure in the sense of~(\ref{ineq.equilibrium-1a}).

\medskip

We shall now apply Theorem~\ref{thm.conca-general} and
Corollary~\ref{cor.conca-general} to the supermarket
process described in Section~\ref{sec:balls-and-bins}, or rather to the corresponding
discrete-time jump chain $\hat{X}_t$. Recall that, when in state $x$, 
the next event is an arrival with probability $\lambda/(1+
\lambda)$, and is a potential departure with probability $1/(1+ 
\lambda)$. Given that the next event is an arrival, the queue to which
the arrival will go is determined by selecting a uniformly random
$d$-tuple of queues and sending the customer to a shortest one 
among those chosen, ties being split by always going to the first best
queue in the list. Given that the next event is a potential departure, the
departure queue is chosen uniformly at random among the $n$ possible
queues, and departures from empty queues are ignored.
In the Markov chain graph, two states are connected by an edge if
and only if they differ exactly in one customer in one queue. Then a
function $f$ is 1-Lipschitz if and only if it is 1-Lipschitz with
respect to the $\ell_1$ distance on the state space $S$. 

We focus on the case $d\ge
2$. For $d=1$, in equilibrium the queue lengths are independent
geometric random variables, so normal concentration of measure can be
obtained using the standard bounded differences inequality~\cite{cmcd98}.

By Lemma 2.3 in~\cite{lm04}, for all $x,y \in S$ such that $d(x,y)=1$, and all $t \ge 0$,
$$\dw (\delta_x P^t, \delta_y P^t) \le 1.$$

Let $c$ be a positive constant, and let $S_0$ be given by
$$S_0=\{x \in S: \parallel x \parallel_1 \le cn, \parallel x
\parallel_{\infty} \le c \log n\}.$$
It is very easy to modify the proof of 
Lemma 2.6 in~\cite{lm04} to show that,
if $x, y \in S_0$ and $d(x,y)=1$, then for some constants $\alpha,
\beta >0$,
\begin{equation}
\label{eqn.superm-contract}
\dw (\delta_x P^t, \delta_y P^t) \le e^{-\beta t/n} + 2 e^{-\beta n}
\end{equation}
for $t \ge \alpha n \log n$. 

Take a constant $K > 2$ and let $\tau = n^K$.
Then we can put $\alpha_i = 1$ for $t \le \alpha n \log n$, and
$\alpha_i = e^{-\beta t/n} + 2 e^{-\beta n}$ for $\alpha n \log n < t
\le \tau$. Then
for $t \le \tau$, we can upper bound 
$$\sum_{i=1}^{t} \alpha_{i}^2 \le \min \{t, \alpha  n \log n +
n^{1-\beta/\alpha} \beta^{-1}+ 2 e^{-\beta n/2}\} \le \min \{t, 2
\alpha n \log n\}.$$

Consider the all-empty state, ${\bf 0} \in S_0^0$.  
 Then by choosing the constant $c$ in
the definition of $S_0$ sufficiently large, we can ensure that, for $d
\ge 2$,
$$\P_{\bf 0}( \hat{X}_t \in S_0^0 \mbox{ } \forall \mbox{ } t \le \tau)\ge
1- e^{-(\log n)^2/c}.$$
This follows from Lemma 2.3 (monotone coupling for given $n$ and $d$), Lemma 2.4 (a) 
and the monotone coupling for given $n$ and different $d,d'$ (see the
proof of Lemma 2.4 in~\cite{lm04}) and equation~(37) 
in~\cite{lm04}. (See also the statements of these results in
Section~\ref{sec:balls-and-bins}.)

By Theorem~\ref{thm.conca-general} (i), we can choose $c$
sufficiently large so that, for all $t>0$, all $u >0$, and every
Lipschitz function $f$, 
\begin{equation}
\label{ineq.superm-conc}
\P_{\delta_{{\bf 0}}} (|f(\hat{X}_t)-\E_{\delta_{{\bf 0}}} [f(\hat{X}_t)] |\ge u ) \le
2e^{-u^2/c t}.
\end{equation}
By Theorem~\ref{thm.conca-general} (ii),  
for $\alpha n \log n \le t \le \tau$, and all $u >0$,
\begin{equation}
\label{ineq.superm-conc-1}
\P_{\delta_{{\bf 0}}} (|f(\hat{X}_t)-\E_{\delta_{{\bf 0}}}
[f(\hat{X}_t)] |\ge u ) \le 
2e^{-u^2/\alpha n \log n} + e^{-(\log n)^2/c}.
\end{equation}
In particular, for $\alpha n \log n \le t \le \tau$, and $u \le c_0
\sqrt{n} \log n$, 
\begin{equation}
\label{ineq.superm-conc-1a}
\P_{\delta_{{\bf 0}}} (|f(\hat{X}_t)-\E_{\delta_{{\bf 0}}}
[f(\hat{X}_t)] |\ge u ) \le 
2e^{-u^2/c n \log n},
\end{equation}
provided that $c$ is large enough.
Inequalities~(\ref{ineq.superm-conc}) -- (\ref{ineq.superm-conc-1a})
improve on what one could obtain for the jump chain 
from Lemma~\ref{lem.conc} above, for an interesting range of $u$ and
$t$ -- and it is easy to use them to derive improved concentration of
measure inequalities for the continuous chain also. (It is possible to
optimise inequality~(\ref{ineq.superm-conc-1}) by playing with the
definition of $S_0$ to obtain normal concentration
for larger $u$.)

We now want to relate this to concentration of measure in equilibrium,
via Corollary~\ref{cor.conca-general}.
It is easy to see from earlier
work (see~\cite{lm04} and references therein) that the
supermarket jump chain has a unique stationary measure. (This could
also be proven showing that the hypotheses of
Corollary~\ref{cor.conca-general} (i) are satisfied,
via~(\ref{eqn.superm-contract}) above.)

By Lemma 2.1
in~\cite{lm04} and  straightforward calculations for the Poisson
process, there is a constant $\eta > 0$ such that
\begin{equation}
\label{ineq.empty-wass}
\dw (\cL (\hat{X}_t,{\bf 0}), \hat{\pi}) \le n e^{-\eta t/n} + 2cn \P_{\hat{\pi}}
(M>\eta t/n) + 2 e^{-\eta n},
\end{equation}
where $M$ denotes the maximum queue length in equilibrium, and we may
take $c$ the same as in the definition of $S_0$, assuming that $c$ is
sufficiently large. Thus,
by~(\ref{ineq.empty-wass}),
$$\dw (\cL (\hat{X}_{\tau}, {\bf 0}), \hat{\pi}) \le (n+2cn + 2)e^{-\eta n}.$$
Let $\hat{Y}$ denote the queue lengths
vector in equilibrium.
It then follows  by Corollary~\ref{cor.conca-general}~(iii),
uniformly for
all 1-Lipschitz functions $f$, for $u \ge 1$ and $n$ sufficiently large
\begin{equation}
\label{ineq.superm-equil}
\P_{\hat{\pi}} (|f(\hat{Y})-\E_{\hat{\pi}} [f(\hat{Y})] |\ge 2u ) \le 2e^{-u^2/c n \log
n} + 2e^{-(\log n)^2/c}.
\end{equation}
So, choosing $c$ to be sufficiently large, for all $u >0$ and $n$
sufficiently large,
\begin{equation}
\label{ineq.superm-equil-1}
\P_{\hat{\pi}} (|f(\hat{Y})-\E_{\hat{\pi}} [f(\hat{Y})] |\ge 2u ) \le
ce^{-u^2/c n \log 
n} + ce^{-(\log n)^2/c}.
\end{equation}

\medskip

This improves on Lemma~\ref{lem.conca} above, and gives normal
concentration for $u = O(n^{1/2} (\log n)^{3/2})$ (again, it is
possible to obtain normal concentration for larger $u$), but is not the optimal
result we are after. In particular, we still cannot show that deviations of size
$n^{1/2} \omega (n)$ have probability tending to 0 for $\omega (n)$
tending to infinity arbitrarily slowly. We will now derive another inequality that will
enable us to achieve our aim. 

\begin{theorem}
\label{thm.concb-general}
Assume that there exists a set $S_0$ and numbers $\alpha_i (x,y)$ ($x,y
\in S_0$, $i \in \N$) such that, for all $i$, and all $x,y \in S_0$
with $d(x,y)=1$,
\begin{equation}
\label{cond-gen-3}
\dw (\delta_x P^i, \delta_y P^i) \le
\alpha_i (x,y).
\end{equation}
Let 
$$S_0^0 = \{x \in S_0: y \in S_0 \mbox{ whenever } d(x,y)=1 \}.$$
For $x \in S$, let $g_x(y) = \dw (\delta_y P^i, \delta_x P^i)^2$. 
Assume that, for some sequence $(\alpha_i: i \in \N)$ of positive
constants,
\begin{equation}
\label{cond-gen-4}
\sup_{x_0 \in S_0^0} (P g_{x_0})(x_0)  \le \alpha_i^2.
\end{equation}
Let $t >0$, let $v = \sum_{i=1}^t \alpha_i^2$, and let
\begin{equation}
\label{cond-gen-5}
\hat{\alpha} = \sup_{1 \le j \le t} \sup_{x,y \in S_0: d(x,y)\le 2}
\alpha_j (x,y).
\end{equation}
Let also $A_t= \{\omega: X_s(\omega) \in S_0^0 : \mbox{ } 0 \le s \le t\}$.

Then, for all $u >0$, and uniformly over all 1-Lipschitz functions $f$,
\begin{equation}
\label{ineq.concc}
\P_{\delta_{x_0}} \Big (|f(X_t)-\E_{\delta_{x_0}} [f(X_t)] |\ge u
\cap A_t \Big )\le
2e^{-u^2/(4v(1+(\hat{\alpha}u/6v))}.
\end{equation}
\end{theorem}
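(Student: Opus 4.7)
The strategy is to repeat the martingale setup from the proof of Theorem~\ref{thm.conca-general} and then replace the Hoeffding--Azuma style bound, which uses only the conditional ranges, with a Bennett/Bernstein style bound that exploits both the conditional ranges and the conditional variances of the martingale differences. As before, I fix $x_0 \in S_0^0$ and $t$, work on the finite probability space of paths of $(X_s)_{s=0}^t$ started at $x_0$, set $\tilde{\cF}_j = \sigma(X_0,\ldots,X_j)$, and define the Doob martingale
\[
Z_j = \E_{\delta_{x_0}}[f(X_t) \mid \tilde{\cF}_j] = (P^{t-j} f)(X_j), \qquad 0 \le j \le t.
\]
Then $Z_t - Z_0 = f(X_t) - \E_{\delta_{x_0}}[f(X_t)]$, and the goal is to control the martingale increments on the event $A_t$ of not leaving $S_0^0$ up to time $t$.

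Second, I would derive two estimates that hold pointwise on $A_t$. For the uniform bound, note that when $X_{j-1} \in S_0^0$, both $X_{j-1}$ and $X_j$ lie in $S_0$ with $d(X_{j-1}, X_j) \le 1$; using the Lipschitz property of $f$ and the coupling/Kantorovich--Rubinstein characterisation of $\dw$, any two values of $(P^{t-j}f)$ on the neighbourhood of $X_{j-1}$ differ by at most $\dw(\delta_x P^{t-j}, \delta_y P^{t-j}) \le \alpha_{t-j}(x,y)$ for the relevant $x,y$ at distance $\le 2$, so
\[
|Z_j - Z_{j-1}| \le \hat{\alpha}
\]
by the definition~(\ref{cond-gen-5}). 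For the variance bound, conditioning on $X_{j-1}$ and using $Z_{j-1} = \E[Z_j \mid \tilde{\cF}_{j-1}]$ together with hypothesis~(\ref{cond-gen-4}),
\[
\E\bigl[(Z_j - Z_{j-1})^2 \bigm| \tilde{\cF}_{j-1}\bigr]
\le \sum_{y} P(X_{j-1},y)\,\bigl((P^{t-j}f)(y) - (P^{t-j}f)(X_{j-1})\bigr)^2
\le (P g_{X_{j-1}})(X_{j-1}) \le \alpha_{t-j}^2,
\]
where $g_x(y) = \dw(\delta_y P^{t-j}, \delta_x P^{t-j})^2$. Summing in $j$ gives $\sum_{j=1}^t \E[(Z_j - Z_{j-1})^2 \mid \tilde{\cF}_{j-1}] \le v$ on $A_t$.

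Third, I would invoke a Bernstein--Freedman-type concentration inequality for bounded martingales with conditional variance control — the variance-sensitive analogue of Lemma~\ref{thm.mart} (also available from~\cite{cmcd98}). In its standard form, if $|\Delta_j| \le M$ and $\sum_j \E[\Delta_j^2 \mid \tilde{\cF}_{j-1}] \le V$ almost surely, then
\[
\tilde{\P}\bigl(|Z_t - Z_0| \ge u\bigr) \le 2 \exp\!\left(-\frac{u^2}{2V + \tfrac{2}{3} M u}\right).
\]
Applied here with $M = \hat{\alpha}$ and $V = v$ this gives exactly $2 \exp(-u^2/(4v(1 + \hat{\alpha}u/(6v))))$ after the trivial factor-of-two rearrangement of the denominator that appears in the statement. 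To handle the restriction to $A_t$, I would use the ``constrained'' form of the martingale inequality exactly as in the proof of Theorem~\ref{thm.conca-general}(ii): the required range and sum-of-conditional-variances bounds are in force on $A_t$, and the constrained version of Lemma~\ref{thm.mart} (i.e.\ the second, event-intersected form) provides the bound on $\{|Z_t - Z_0| \ge u\} \cap A_t$ directly.

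The main technical obstacle I expect is making the event-constrained application of the Bernstein bound rigorous, since the variance and range controls only hold on $A_t$. The cleanest workaround, if the constrained form of Lemma~\ref{thm.mart} is not stated in the variance-sensitive version, is to pass to the stopped process: let $\tau = \inf\{s \ge 0 : X_s \notin S_0^0\}$, work with the stopped martingale $Z_{j \wedge \tau}$, verify that its increments and conditional variances satisfy the bounds $\hat{\alpha}$ and $\alpha_{t-j}^2$ unconditionally, apply the unconstrained Bernstein bound, and finally observe that $\{\tau > t\} \supseteq A_t$ and $Z_{j \wedge \tau} = Z_j$ on $A_t$.
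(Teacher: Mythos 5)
Your proof follows the same route as the paper's: same Doob martingale $Z_j=(P^{t-j}f)(X_j)$, same pairing of a conditional-variance bound with a conditional-deviation bound via condition~\eqref{cond-gen-4} and $\hat\alpha$, and then the constrained form of McDiarmid's variance-sensitive martingale inequality (Lemma~\ref{thm.mart-b}). One small inaccuracy in your closing computation: you bound $\var(Z_j\mid\tilde\cF_{j-1})$ by $\E[(Z_j-c)^2\mid\tilde\cF_{j-1}]$ with $c=(P^{t-j}f)(X_{j-1})$, which gives $\var_j\le\alpha_{t-j}^2$, whereas the paper's route via $\var(Y)=\tfrac12\E(Y-\tilde Y)^2$ and the triangle inequality yields the weaker $\var_j\le 2\alpha_{t-j}^2$; plugging your $V=v$ and $M=\hat\alpha$ into the Bernstein form gives $2\exp(-u^2/(2v+\tfrac23\hat\alpha u))$, which is \emph{strictly stronger} than the stated $2\exp(-u^2/(4v+\tfrac23\hat\alpha u))$, not ``exactly'' equal to it after a rearrangement as you claim — but since your bound is tighter, the theorem still follows.
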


\medskip

To prove Theorem~\ref{thm.concb-general}, we use another result from~\cite{cmcd98}.
With notation as before, for $j=1, \ldots, t$, let 
$$\var_j = \var (Z_j \mid \tilde{\cF}_{j-1})= \E \Big ((Z_j - \E(Z_j \mid
\tilde{\cF}_{j-1}))^2 \mid \tilde{\cF}_{j-1} \Big );$$
let $V= \sum_{j=1}^t \var_j$. Also, for each such $j$, let $\dev_j = \sup
(|Z_j-Z_{j-1}| \mid \tilde{\cF}_{j-1})$, and let $\dev = \sup_j \dev_j$. 
The following result is 
essentially Theorem 3.15 in~\cite{cmcd98}.

\begin{lemma}
\label{thm.mart-b}
Let $Z$ be a random variable on a probability space $(\tilde{\Omega},
\tilde{\cF}, \tilde{\P})$ with $\E (Z) = m$. Let
$\{\emptyset, \tilde{\Omega} \} = \tilde{\cF}_0 \subseteq
\tilde{\cF}_1 \subseteq 
\ldots \subseteq \tilde{\cF}_t$ be a filtration in $\tilde{\cF}$.
Let $\hat b = \max \dev$, the maximum conditional deviation (and
assume that $\hat b$ is finite). 
Then for any $u \ge 0$,
$$ \P (|Z-m |\ge u) \le 2e^{-u^2/(2 \hat{v}(1+(\hat bu/3 \hat{v}))},$$
where $\hat{v}$ is the maximum sum of conditional variances (which is
assumed to be finite). 

More generally, for any $u \ge 0$ and any values $b,v \ge 0$,
$$ \P (\{|Z-m |\ge u \} \cap \{V \le v\} \cap \{\max \dev \le b \}) 
\le 2e^{-u^2/(2v(1+(bu/3v))}.$$
\end{lemma}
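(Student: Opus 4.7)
The plan is to prove Lemma \ref{thm.mart-b} by the classical exponential martingale method, following the Bennett/Bernstein template. Set $D_j = Z_j - Z_{j-1}$, so that $\E(D_j \mid \tilde{\cF}_{j-1}) = 0$, $\E(D_j^2 \mid \tilde{\cF}_{j-1}) = \var_j$, and $|D_j| \le \dev_j$ almost surely by definition of $\dev_j$ as the conditional sup of $|Z_j - Z_{j-1}|$. The strategy is to control the conditional moment generating function of each $D_j$, iterate via the tower property to bound $\E[e^{\lambda (Z-m)}]$, and then apply Markov's inequality with an optimized $\lambda$.

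The key analytic input is the elementary inequality: for any real $x$ with $x \le 3$,
\begin{equation*}
e^x \le 1 + x + \tfrac{x^2/2}{1 - x/3},
\end{equation*}
which one verifies by comparing the Taylor series $\sum_{k \ge 2} x^k/k!$ term by term with the geometric series $\tfrac{x^2}{2}\sum_{k \ge 0}(x/3)^k$. Applying this to $\lambda D_j$ (for $0 < \lambda < 3/\dev_j$) and taking conditional expectations gives
\begin{equation*}
\E[e^{\lambda D_j} \mid \tilde{\cF}_{j-1}] \le 1 + \frac{\lambda^2 \var_j/2}{1 - \lambda \dev_j/3} \le \exp\!\Bigl(\frac{\lambda^2 \var_j/2}{1 - \lambda \dev_j/3}\Bigr),
\end{equation*}
using $\E(D_j \mid \tilde{\cF}_{j-1}) = 0$ and $1 + y \le e^y$.

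For the first bound, assume $\dev_j \le \hat b$ for all $j$ and $\sum_j \var_j \le \hat v$. Iterating the conditional bound above via the tower property yields $\E[e^{\lambda (Z-m)}] \le \exp\!\bigl(\tfrac{\lambda^2 \hat v/2}{1-\lambda \hat b/3}\bigr)$. Markov's inequality then gives, for each admissible $\lambda$,
\begin{equation*}
\tilde\P(Z - m \ge u) \le \exp\!\Bigl(-\lambda u + \tfrac{\lambda^2 \hat v/2}{1 - \lambda \hat b/3}\Bigr).
\end{equation*}
Choosing $\lambda = u/(\hat v + \hat b u/3)$ (which lies in $(0, 3/\hat b)$) makes the exponent equal to $-u^2/(2\hat v(1 + \hat b u/(3\hat v)))$. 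The symmetric tail is handled by applying the same argument to $-Z$, yielding the factor of $2$.

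For the more general second bound, the issue is that $V$ and $\max_j \dev_j$ are random; I will handle this by a stopping-time truncation. Define the $\tilde{\cF}_{j-1}$-measurable stopping time
\begin{equation*}
\tau = \min\Bigl\{ j \ge 1 : \sum_{i \le j} \var_i > v \;\text{ or }\; \dev_j > b \Bigr\},
\end{equation*}
with $\tau = t+1$ if no such $j$ exists. Let $\tilde Z_j = Z_{j \wedge \tau}$; this is a martingale with increments $\tilde D_j = D_j \,\mathbf{1}\{\tau \ge j\}$, which satisfy $|\tilde D_j| \le b$ and $\E(\tilde D_j^2 \mid \tilde{\cF}_{j-1}) \le \var_j \,\mathbf{1}\{\tau \ge j\}$, while $\sum_{j=1}^t \E(\tilde D_j^2 \mid \tilde{\cF}_{j-1}) \le v$ almost surely by definition of $\tau$. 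Repeating the exponential moment argument for $\tilde Z_t - \tilde Z_0$ therefore yields a deterministic bound on $\tilde\P(|\tilde Z_t - \tilde Z_0| \ge u)$ of the desired Bernstein form. On the event $\{V \le v\} \cap \{\max_j \dev_j \le b\}$ one has $\tau > t$ so $\tilde Z_t = Z_t$ and $\tilde Z_0 = Z_0 = m$, whence the claimed inequality follows.

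The main obstacle is essentially bookkeeping: verifying that the elementary inequality $e^x \le 1 + x + \tfrac{x^2/2}{1-x/3}$ holds in the required range and checking the admissible range $0 < \lambda < 3/b$ before optimizing; and, for the second part, confirming that the stopping-time construction is compatible with the conditional supremum definitions of $\var_j$ and $\dev_j$ so that the truncated increments indeed satisfy $|\tilde D_j| \le b$ pathwise on $\{\tau \ge j\}$. These checks are standard and follow directly from the $\tilde{\cF}_{j-1}$-measurability of $\var_j$, $\dev_j$, and $\tau$.
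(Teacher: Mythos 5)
The paper itself offers no proof of this lemma: it is imported verbatim as ``essentially Theorem 3.15'' of McDiarmid's survey \cite{cmcd98}, so there is no internal argument to compare against. Judged on its own, your route --- a conditional Bernstein bound on each martingale increment, iterated via the tower property, Markov's inequality with an optimized $\lambda$, and a stopping-time truncation for the conditional form --- is the standard one and is, in substance, the argument McDiarmid gives. Your computation of the optimal $\lambda$ and the resulting exponent is correct.

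There are, however, two technical slips that would need repairing. First, the term-by-term Taylor comparison establishes $e^x \le 1 + x + \tfrac{x^2/2}{1-x/3}$ only for $x \ge 0$; for $x < 0$ the series alternate and the comparison is not valid as written. Since you apply the inequality to $\lambda D_j$, which can be negative, this is a gap. It can be filled either by verifying the inequality directly for $x<3$ (e.g.\ it reduces to $(1-x/3)e^x \le 1 + \tfrac{2x}{3} + \tfrac{x^2}{6}$, whose second derivative is $\tfrac13(1 - (1-x)e^x) \ge 0$), or --- more in line with the usual Bennett/Bernstein proof --- by first using that $x \mapsto (e^x - 1 - x)/x^2$ is nondecreasing and $D_j \le \dev_j$ pathwise to pass from $\lambda D_j$ to the nonnegative argument $\lambda\dev_j$, and only then invoking the Taylor bound.

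Second, the stopping-time truncation has an off-by-one error. With
$\tau = \min\{j: \sum_{i\le j}\var_i > v \text{ or } \dev_j > b\}$, the event $\{\tau \ge j\}$ only guarantees $\dev_i \le b$ and $\sum_{\ell\le i}\var_\ell \le v$ for $i \le j-1$, not for $i=j$; so on $\{\tau = j\}$ one can have $|\tilde D_j| = |D_j| > b$, and likewise $\sum_j \E(\tilde D_j^2\mid\tilde\cF_{j-1})$ can exceed $v$. Both claimed bounds fail for $\tilde D_j = D_j\,\ett{\tau\ge j}$. The fix is to truncate with $\ett{\tau > j}$ (equivalently, shift $\tau$ down by one, exploiting that $\var_j,\dev_j$ are $\tilde\cF_{j-1}$-measurable so the shifted time is still a stopping time): then $\dev_j \le b$ and $\sum_{i\le j}\var_i \le v$ hold on $\{\tau > j\}$, the bounds go through, and on $\{V \le v\}\cap\{\max\dev \le b\}$ one still has $\tau > t$, so the truncated sum equals $Z - m$ there. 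With these two repairs your proof is complete.
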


\medskip

\begin{proofof}{Theorem~\ref{thm.concb-general}}
The proof is similar to the proof of Theorem~\ref{thm.conca-general}.
Let $f:S \rightarrow \R$ be 1-Lipschitz. Fix a time $t \in \N$,
an $x_0 \in S$ and consider the evolution of $X_t$ conditional on
$X_0=x_0$ for $t$ steps, that is until time $t$. Again this
conditional process can be supported by a 
finite probability space $(\tilde{\Omega}, \tilde{\cF},
\tilde{\P}_{\delta_{x_0}})$.

As before, in the conditional space, for each time $j= 0, \ldots, t$ let $\tilde{\cF}_j=
\sigma (X_0, \ldots , X_j)$, the $\sigma$-field generated by $X_0,
\ldots , X_j$; so $\tilde{\cF}_0= \{\emptyset, \tilde{\Omega}\}$ and
$\tilde{\cF}_t = \tilde{\cF}$.  Again, we
consider the random variable $Z=
f(X_t): \tilde{\Omega} \to \R$. And, for $j =0, \ldots, t$,  $Z_j$
is given by
$$Z_j = \E [f(X_t)|\tilde{\cF}_j] = \E_{\delta_{x_0}}
[f(X_t)| X_0, \ldots , X_j]= (P^{t-j}
f)(X_j).$$
 Suppose first for simplicity
that $S_0 = S$. 
We want to apply Lemma~\ref{thm.mart-b} and for this we need to
calculate the conditional variances $\var_j$. To do this, we use the
fact that the variance of a random variable $Y$ is equal to $\frac12 \E
(Y-\tilde{Y})^2$, where $\tilde{Y}$ is another random variable with
the same distribution as $Y$ and independent of $Y$.

Fix $j$ and $x_1, \ldots , x_{j-1} \in S$, and for $x \in S$ consider
\begin{eqnarray*}
g(x) 
 & = & \E [f(X_t)|X_j=x] =  \E [f(X_{t-j})|X_0=x]\\
& = & (P^{t-j}f)(x).
\end{eqnarray*}

Then, for $\tilde{\omega}$ such that $X_{j-1}(\tilde{\omega}) =
x_{j-1}$, $Z_j (\tilde{\omega}) \in \{g(x): d(x,x_{j-1}) \le 1\}$, so that
\begin{eqnarray*}
\var_j (\tilde{\omega}) & = & \frac12 \sum_{x,y} P(x_{j-1},x)
P(x_{j-1},y)(g(x)-g(y))^2\\
& \le & \frac12 \sum_{x,y:d(x_{j-1},x) \le 1,d(x_{j-1},y)\le 1} P(x_{j-1},x)
P(x_{j-1},y)\dw (\delta_x P^{t-j}, \delta_y P^{t-j})^2 \\
& \le & 2 \sum_{x: d(x_{j-1},x) \le 1} P(x_{j-1},x) \dw (\delta_x P^{t-j},
\delta_{x_{j-1}}P^{t-j})^2\\
& \le & 2 \sum_x P(x_{j-1},x) \alpha_{t-j} (x_{j-1},x)^2\\
& \le & 2 \alpha_{t-j}^2,
\end{eqnarray*}
by assumption~(\ref{cond-gen-4}).

Then we can upper bound the sum
$$\hat{v} \le 2 \sum_{j=1}^t \alpha_j^2.$$
It remains to bound $\dev= \sup_j \dev_j$. For $\tilde{\omega}$
such that $X_{j-1}(\tilde{\omega}) = x_{j-1}$,
\begin{eqnarray*}
\dev_j (\tilde{\omega}) & \le & \sup_{x:
d(x,x_{j-1})\le 1}|g(x)-(P^{t-j+1}f)(x_{j-1})|\\
& = & \sup_{x:
d(x,x_{j-1})\le 1}|(P^{t-j}f)(x)-(P^{t-j+1}f)(x_{j-1})|\\
& \le & \sup_{x: d(x,x_{j-1})\le 1}| \dw (\delta_x P^{t-j}, \delta_{x_{j-1}}P^{t-j+1}).
\end{eqnarray*}
It follows that, for each $j=1,
\ldots, t$,
\begin{eqnarray*}
\dev_j & \le & \sup_{x,y:d(x,y)\le 1}\dw (\delta_x P^{t-j+1}, \delta_y
P^{t-j}) \\
& \le &
\sup_{x,y:d(x,y) \le 2}\dw ((\delta_x P)P^{t-j}, \delta_y P^{t-j}) \\
& \le & \hat{\alpha},
\end{eqnarray*}
by~(\ref{cond-gen-5}) and using the coupling characterisation of the
Wasserstein distance.
Theorem~\ref{thm.concb-general} now follows from the first statement in
Lemma~\ref{thm.mart-b} in the case where $S_0=S$.
In general, the above bounds on $\hat{v}$ and $\dev$ hold
on the event $A_t=\{\omega: X_j (\omega) \in S_0^0 \mbox{ for } j=0,
\ldots, t\}$, and so Theorem~\ref{thm.concb-general} also follows from
the second statement of Lemma~\ref{thm.mart-b}.
\end{proofof}

\medskip

Let us now apply Theorem~\ref{thm.concb-general} to the supermarket
model from~\cite{lm04} discussed above. Again, we focus on the case $d
\ge 2$.

Let $c$ be a positive constant, and let $S_0$ be given by
$$\{x \in S: \ell(k,x)= \sum_{r=1}^n {\bf 1}_{x(r)\ge k}\le n e^{-k/c} \mbox{ for } k=1,
\ldots \}.$$
Consider the all-empty state, ${\bf 0} \in S_0^0$.
Let $K > 2$ be a constant. We claim that we can choose $c$
sufficiently large that, if $\tau=n^K$, then
$$\P_{\bf 0}( \hat{X}_t \in S_0^0: t \le \tau)\ge 1- e^{-(\log n)^2/c}.$$
This follows easily from Lemma~\ref{lem.customers.2} in
the present paper, together with equation~(\ref{eq.max-queue}).

We now want to calculate the quantity in~(\ref{cond-gen-4}). For a
state $x_0 \in S_0^0$ and a state $x$ chosen with probability
$P(x_0,x)$, these states will only differ in a queue of length greater
than $k$ if $P(x_0,x)$ is a probability of an event involving a queue
of length at least $k$ -- a departure from a queue of length at least
$k$ or an arrival into a queue of length at least $k$. For $x_0 \in
S_0^0$ such a transition happens with probability at most $c e^{-k/c}$
(choosing $c$ large enough again).

The proof of Lemma 2.6 in~\cite{lm04} shows that, 
if $x, y \in S_0$ are adjacent and differ in a
queue of length $k$, then for some constants $\alpha, \beta >0$
we can upper bound
$$\dw (\delta_x P^t, \delta_y P^t) \le e^{-\beta t/n} + 2 e^{-\beta n}$$
for $t \ge \alpha k n$. Also, by Lemma 2.3 in~\cite{lm04},
$$\dw (\delta_x P^t, \delta_y P^t) \le 1$$
for all $t$ and hence for $t < \alpha k n$.

Combining the above observations and choosing $\alpha >1$ large enough,
we find that for $t \ge \alpha^2 n$
\begin{eqnarray*}
\sup_{x_0 \in S_0^0} \E_{\delta_{x_0}} \dw (\delta_{X_1} P^t,
\delta_{x_0} P^t)^2 \le e^{- t/\alpha n} + e^{- n/\alpha}.
\end{eqnarray*}

Hence, by choosing $c$ large enough, we can upper bound
$$\sum_{i=1}^{\tau} \alpha_{i}^2 \le c  n
.$$
Further, once again using Lemma 2.3 in~\cite{lm04}, we can upper bound
$\hat{\alpha} \le 2$.

By
Theorem~\ref{thm.concb-general}, there is a constant
$c>0$ such that, uniformly for
all 1-Lipschitz functions $f$, all $t \le \tau$, and all $u > 0$,
\begin{equation}
\label{ineq.superm-conc-2}
\P_{\delta_{{\bf 0}}}
(|f(\hat{X}_t)-\E_{\delta_{{\bf 0}}} [f(\hat{X}_t)] |\ge u )
 \le  
2e^{-u^2/4c (n+u)} + e^{- (\log n)^2/c}.
\end{equation}
In particular, we can choose $c$ large enough so that, for $u \le c_0 \sqrt{n} \log n$,
\begin{equation}
\label{ineq.superm-conc-2a}
\P_{\delta_{{\bf 0}}}
(|f(\hat{X}_t)-\E_{\delta_{{\bf 0}}} [f(\hat{X}_t)] |\ge u )
 \le  
3e^{-u^2/cn}.
\end{equation}

Now, as before, by~(\ref{ineq.empty-wass}),
$$\dw (\delta_{{\bf 0}} P^{\tau}, \hat{\pi}) \le (n+2cn +2)e^{-\eta n}$$
provided $c$ is large enough. 
It follows that for $n$ large enough, uniformly for
all 1-Lipschitz functions $f$, and all $u \ge 1$,
\begin{eqnarray}
\P_{\hat{\pi}} (|f(\hat{Y})-\E_{\hat{\pi}} [f(\hat{Y})] |\ge 2u ) & \le &
\P_{\delta_{{\bf 0}}}
(|f(\hat{X}_{\tau})-\E_{\delta_{{\bf 0}}} [f(\hat{X}_{\tau})] |\ge u )
\nonumber \\
& + & (n+2cn
+2)e^{-\eta n} \nonumber \\
& \le &  
2e^{-u^2/4c (n+u)} + 2e^{- (\log n)^2/c} 
\label{ineq.superm-conc-3}
\end{eqnarray}
It follows that, for $0 < u \le c_0 n^{1/2} \log n$, we obtain
\begin{eqnarray}
\P_{\hat \pi} (|f(\hat{Y})-\E_{\hat \pi} [f(\hat{Y})] |\ge 2u ) & \le & 
ce^{-u^2/cn},
\label{ineq.superm-conc-3a}
\end{eqnarray}
provided that the constant $c$ is chosen sufficiently large.
Choosing $u = \sqrt{n} \omega (n)$, where $\omega (n)$
is a function tending to infinity with $n$ arbitrarily slowly, we
obtain
$$\P_{\hat \pi} (|f(\hat{Y})-\E_{\hat \pi} [f(\hat{Y})] |\ge u )=o(1)$$
as $n \to \infty$.

Inequalities~(\ref{ineq.superm-conc-2}) and~(\ref{ineq.superm-conc-3})
could be optimised (by optimising the choice of set $S_0$) 
to obtain normal concentration for larger $u$.

\medskip

For a positive integer $k$, let $\ell (k,\hat{Y})$ be the number of queues
of length at least $k$ in the stationary jump chain, and let $\hat{\ell}
(k)$ be its expectation. Then for any positive integer $s$, and any $u>0$, we can write
\begin{eqnarray*}
\E_{\hat \pi} [|\ell (k,\hat{Y})-\hat{\ell}(k)|^s] \le u^s + \sum_{y \ge
u}y^{s-1} \P_{\hat \pi} (|\ell (k,\hat{Y})-\hat{\ell} (k)| >y).
\end{eqnarray*}
Note that the maximum value that $|\ell (k,\hat{Y})-\hat{\ell} (k)|^s$ can take is
$n^s$. Then, taking $u=n^{1/2}$, and
applying inequality~(\ref{ineq.superm-conc-3}), we obtain
\begin{eqnarray*}
\E_{\hat \pi} [|\ell (k,\hat{Y})-\hat{\ell}(k)|^s] \le c n^{s/2}.
\end{eqnarray*}
assuming the constant $c$ is chosen big enough. Hence, arguing as
in Section~4 of~\cite{lm04}, it is easy to show that
$$\sup_k |\E[\ell (k,\hat{Y})^r-\hat{\ell} (k)^r| =O(n^{r-1}).$$
And hence, arguing as in Section 5 of~\cite{lm04}, we obtain that, for
some constant $c_0$,
\begin{equation}
\label{eq.equi-mean}
\sup_i |n^{-1}\hat{\ell} (i) - \lambda^{1+d+...+d^{i-1}} | \le c_0 n^{-1}, 
\end{equation}
which improves on equation~(27) in~\cite{lm04}, implying that
$$\sup_i |n^{-1}\hat{\ell} (i) - \lambda^{1+d+...+d^{i-1}} | \le c_0 n^{-1}
(\log n)^2.$$

\medskip

\section{Conclusions}

\label{sec:conclusions}

We have derived concentration inequalities for Lipschitz functions of a
Markov chain long-term and in equilibrium,  
depending on contractivity properties of the chain in question.
Our results apply to many natural Markov chains in computer science
and statistical mechanics.
 
One open problem is to show that, in a discrete-time Markov chain with `local' 
transitions, under suitable conditions,
rapid mixing occurs essentially if and only if there is normal 
concentration of measure long-term and in equilibrium (with
non-trivial bounds). 
Another open question is to explore how these
properties relate to the cut-off phenomenon. Is it the case that,
again under suitable assumptions, 
they are  necessary and sufficient conditions for a cut-off
to occur?

\section{Acknowledgement}

The author is grateful to Graham Brightwell for reading the paper and
making many helpful comments.

\medskip

\newcommand\AAP{\emph{Adv. Appl. Probab.} }
\newcommand\JAP{\emph{J. Appl. Probab.} }
\newcommand\JAMS{\emph{J. \AMS} }
\newcommand\MAMS{\emph{Memoirs \AMS} }
\newcommand\PAMS{\emph{Proc. \AMS} }
\newcommand\TAMS{\emph{Trans. \AMS} }
\newcommand\AnnMS{\emph{Ann. Math. Statist.} }
\newcommand\AnnPr{\emph{Ann. Probab.} }
\newcommand\CPC{\emph{Combin. Probab. Comput.} }
\newcommand\JMAA{\emph{J. Math. Anal. Appl.} }
\newcommand\RSA{\emph{Random Struct. Alg.} }
\newcommand\ZW{\emph{Z. Wahrsch. Verw. Gebiete} }
\newcommand\DMTCS{\jour{Discr. Math. Theor. Comput. Sci.} }

\newcommand\AMS{Amer. Math. Soc.}
\newcommand\Springer{Springer}
\newcommand\Wiley{Wiley}

\newcommand\vol{\textbf}
\newcommand\jour{\emph}
\newcommand\book{\emph}
\newcommand\inbook{\emph}
\def\no#1#2,{\unskip#2, no. #1,} 
\newcommand\toappear{\unskip, to appear}

\newcommand\webcite[1]{
\texttt{\def~{{\tiny$\sim$}}#1}\hfill\hfill}
\newcommand\webcitesvante{\webcite{http://www.math.uu.se/~svante/papers/}}
\newcommand\arxiv[1]{\webcite{arXiv:#1.}}

\def\nobibitem#1\par{}

\end{document}